\theoremstyle{definition}
\newtheorem{thm}{Theorem}[section]
\newtheorem{prop}[thm]{Proposition}
\newtheorem{lemma}[thm]{Lemma}
\newtheorem{cor}[thm]{Corollary}
\newtheorem{defn}[thm]{Definition}
\newtheorem{exam}[thm]{Example}
\newtheorem{rmk}[thm]{Remark}
\theoremstyle{definition} 
\newenvironment{talign*}
{\let\displaystyle\textstyle\csname align*\endcsname}
{\endalign}
\def\sl{\textrm{SL}(2,\mathbb{C})}
\def\psl{\textrm{PSL}(2,\mathbb{C})}
\def\Zbb{\mathbb{Z}}
\def\Hb{{\overline{\mathbb{H}^3}}}
\begin{document}
	
	\begin{frontmatter}
	\title{On the potential functions for a link diagram}
	\author{Seokbeom Yoon}
	\address{Department of Mathematical Sciences, Seoul National University}

	\begin{abstract} For an oriented diagram of a link $L$ in the 3-sphere, Cho and 	 Murakami defined the potential function whose critical point, slightly different from the usual sense, corresponds to a boundary parabolic $\mathrm{PSL}(2,\mathbb{C})$-representation of $\pi_1(S^3 \setminus L)$. They also showed
	that the volume and Chern-Simons invariant of such a representation can be computed from the potential function with its partial derivatives. In this paper, we extend  the potential function to a $\psl$-representation that is not necessarily boundary parabolic.  Under a mild assumption, it leads us to a combinatorial formula for computing the volume and Chern-Simons invariant of a   $\mathrm{PSL}(2,\mathbb{C})$-representation of a closed 3-manifold.
	\end{abstract}
	\begin{keyword} Potential function, Volume, Chern-Simons invariant, Dehn-filling.
		\MSC[2010] 57M25\sep  	57M27.
	\end{keyword}

\end{frontmatter}


\section{Introduction}
Let $L$ be a link in the 3-sphere with a fixed diagram.  Motivated from the paper of Yokota \cite{yokota2002potential} regarding the volume conjecture, Cho and Murakami \cite{cho2013optimistic, cho_2016} defined the \emph{potential function} $W(w_1,\cdots,w_{n})$ satisfying the following properties: (i) a non-degenerate point $\mathbf{w} = (w_1,\cdots,w_{n}) \in (\Cbb^\times)^n=(\Cbb \setminus \{0\})^n$ satisfying 
	\begin{equation}\label{eqn:Critical}
	\textrm{exp} \left( w_j \dfrac{\partial W}{\partial w_j} \right) =1\quad \textrm{for all } 1 \leq j \leq n
	\end{equation} corresponds to a boundary parabolic representation $\rho_{\mathbf{w}} : \pi_1(S^3 \setminus L) \rightarrow \psl$
	(we shall clarify the meaning of a non-degenerate point in Section \ref{sec:PF}); 
	(ii) the volume and Chern-Simons invariant of $\rho_\mathbf{w}$ are given by \[ \sqrt{-1} (\textrm{Vol}(\rho_\mathbf{w})+ \sqrt{-1}\mkern1mu\textrm{CS}(\rho_\mathbf{w})) \equiv W_0(\mathbf{w}) \quad \textrm{mod } \pi^2 \Zbb\] where the function $W_0(w_1,\cdots,w_{n})$ is defined by
	\[ W_0:=W(w_1,\cdots,w_{n})-\displaystyle\sum_{j=1}^{n}  \mkern-2mu \left(w_j \dfrac{\partial W}{\partial w_j}\right)\textrm{log}\,w_j. \]
	Also, Cho \cite{cho2016optimistic} proved that (iii) any boundary representation $\rho :\pi_1(S^3 \setminus L)\rightarrow \psl$ which does not send a meridian of each component of $L$ to the identity matrix is detected by $W$, i.e. there exists a non-degenerate point $\mathbf{w}\in(\Cbb^\times)^n$ satisfying the equation (\ref{eqn:Critical}) such that the corresponding representation $\rho_\mathbf{w}$ agrees with $\rho$ up to conjugation.

Main aim of this paper is to extend the potential function to a representation that is not necessarily boundary parabolic. Precisely, we define a \emph{generalized potential function} \[\Wbb(\mathbf{w},\mathbf{m})=\Wbb(w_1,\cdots,w_{n},m_1,\cdots,m_h),\] where $h$ is the number of the components of $L$, and show that it satisfies analogous properties, Theorems \ref{thm:Main1}, \ref{thm:Main2} and \ref{thm:Main3}, to the potential function $W$.

\subsection{Main theorems}
We here give an overview of our main theorems. We enumerate the components of $L$ by $1 \leq i \leq h$ and let $\mu_i$ and $\lambda_i$ be a meridian and the canonical longitude of each component, respectively.
\begin{thm}\label{thm:Main1} A non-degenerate point $(\mathbf{w},\mathbf{m})\in(\Cbb^\times)^{n+h}$ satisfying
\begin{equation}\label{eqn:critical}
\textrm{exp} \left(w_j \dfrac{\partial \Wbb}{\partial w_j} \right) =1 \quad \textrm{for all } 1 \leq j \leq n
\end{equation} corresponds to a representation $\rho_{\mathbf{w},\mathbf{m}} : \pi_1(S^3 \setminus L)\rightarrow \psl$ such that the eigenvalues of $\rho_{\mathbf{w},\mathbf{m}}(\mu_i)$ are $m_i$ and $m_i^{-1}$ (up to sign) for all $1 \leq i \leq h$.\end{thm}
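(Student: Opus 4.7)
The plan is to adapt the Cho--Murakami strategy for the boundary-parabolic potential function $W$ \cite{cho_2016} so that it tracks the extra parameters $m_1,\ldots,m_h$. The starting point is the octahedral decomposition of $S^3\setminus L$ associated to the diagram: one places an ideal octahedron at each crossing and further subdivides it into ideal tetrahedra. The variables $w_1,\ldots,w_n$ parametrize the shape parameters of these tetrahedra, while the extra variables $m_i$ encode the eigenvalue to be prescribed on the meridian $\mu_i$. The function $\Wbb$ is then assembled from dilogarithmic contributions of each tetrahedron together with explicit $m_i$-dependent correction terms that deform the boundary-parabolic setting to the general case; in particular setting $m_i=1$ for all $i$ should recover the classical $W$.

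First I would show that each equation $\exp(w_j\,\partial\Wbb/\partial w_j)=1$ is precisely an edge-consistency (gluing) equation of the triangulation. This is a term-by-term calculation using $d\,\mathrm{Li}_2(z)=-\log(1-z)\,d\log z$: the derivative in $w_j$ only picks up contributions from those tetrahedra meeting the corresponding edge, and each contribution assembles into a shape parameter or its $(1-\cdot)$ counterpart. The non-degeneracy hypothesis guarantees that all relevant shape parameters lie in $\Cbb\setminus\{0,1\}$, so the resulting ideal simplices are genuine.

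Once the edge equations hold, the representation $\rho_{\mathbf{w},\mathbf{m}}$ is produced by the standard developing-map construction. Starting from a base tetrahedron realized in $\H$, neighbouring tetrahedra can be pasted consistently around each edge precisely because the gluing equations are satisfied, yielding a well-defined monodromy $\rho_{\mathbf{w},\mathbf{m}}:\pi_1(S^3\setminus L)\to\psl$. Unlike the boundary-parabolic case treated by Cho--Murakami, the cusp cross-sections are now endowed with similarity structures whose dilation factors are generally nontrivial, which is exactly what allows the meridian holonomies to be non-parabolic.

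Finally I would verify the eigenvalue condition by working on the cusp triangulation of the $i$-th component. Traversing $\mu_i$ across the cusp produces a signed product of shape parameters, and the $m_i$-dependent terms of $\Wbb$ are designed so that this product simplifies to $m_i^{\pm 2}$, giving eigenvalues $m_i^{\pm 1}$ up to sign for $\rho_{\mathbf{w},\mathbf{m}}(\mu_i)$. The main obstacle I expect is purely combinatorial bookkeeping: carefully identifying which shape parameters surround each edge of the triangulation and each meridian curve on the cusp, then matching the derivative computation of step two with the cusp monodromy computation of step four. Once this is organized so that the $m_i=1$ specialization reduces to Cho--Murakami's setting, the general case follows by tracking the additional $m_i$-contributions locally at the relevant edges and cusps.
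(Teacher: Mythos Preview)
Your proposal is correct and follows essentially the same route as the paper's proof in Section~\ref{sec:proof1}: assign cross-ratios to the five tetrahedra in each octahedron via the $(\mathbf{w},\mathbf{m})$-parametrization, verify the gluing equations, extract the holonomy representation, and read off the meridian eigenvalue as a product of cross-ratios along the cusp.

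One point deserves a sharper statement. You write that ``each equation $\exp(w_j\,\partial\Wbb/\partial w_j)=1$ is precisely an edge-consistency equation,'' and then pass directly to the developing map as if all gluing equations were thereby accounted for. In the paper's triangulation (which is of $S^3\setminus(L\cup\{p,q\})$, not of $S^3\setminus L$ itself) the edges fall into several classes: the internal edges created by subdividing each octahedron into five tetrahedra, the over-edges and under-edges corresponding to arcs of the diagram, and the $n$ regional edges corresponding to regions. Only the regional-edge gluing equations coincide with the $n$ critical-point equations; the gluing equations at the other edge types hold \emph{identically} in $(\mathbf{w},\mathbf{m})$, and the paper verifies each of these by an explicit telescoping product of cross-ratios. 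This is exactly the ``combinatorial bookkeeping'' you anticipate, but you should be explicit that the critical-point system is a proper subset of the full gluing system, with the remaining equations satisfied automatically by the specific form of the cross-ratio assignment.
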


\begin{thm}\label{thm:Main2} Let $\rho:\pi_1(S^3 \setminus L)\rightarrow\psl$ be a representation such that $\rho(\mu_i) \neq \pm I$ for all $1 \leq i\leq h$. If $\rho$ admits a $\sl$-lifting, then there exists a non-degenerate point $(\mathbf{w},\mathbf{m})$ satisfying the equation (\ref{eqn:critical}) such that the corresponding representation $\rho_{\mathbf{w},\mathbf{m}}$ agrees with $\rho$ up to conjugation.	
\end{thm}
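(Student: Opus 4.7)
The plan is to construct the required $(\mathbf{w},\mathbf{m})$ directly from the given representation $\rho$ and its chosen $\sl$-lifting $\widetilde{\rho}$, following the blueprint of Cho's argument \cite{cho2016optimistic} in the boundary parabolic case but adapted to non-unipotent meridian holonomies. The central geometric object is the octahedral decomposition of $S^3 \setminus L$ associated to the diagram, one ideal octahedron per crossing, whose shape parameters (after the standard change of variables used in defining $\Wbb$) are the $w_j$. Once coherent developed positions of the ideal vertices are exhibited under $\widetilde{\rho}$, we can read off $(\mathbf{w},\mathbf{m})$ and the critical equations will follow from gluing consistency.

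First I would use $\widetilde{\rho}$ to assign, to each region of the diagram, a point of $\mathbb{CP}^1$ acting as the developed position of the cusp vertex dual to that region. The hypothesis $\rho(\mu_i) \neq \pm I$ ensures that each $\rho(\mu_i)$ has two distinct fixed points in $\mathbb{CP}^1$, so the canonical choice of cusp vertex at each meridional region (for instance, an eigenline of the meridian holonomy) is well defined. Propagating this choice across the diagram via the meridional words that label arcs produces an assignment equivariant under $\widetilde{\rho}$. Setting $w_j$ to be the appropriate cross-ratio (or Ptolemy invariant) of the four developed vertices surrounding the $j$-th crossing, and $m_i$ to be an eigenvalue of $\widetilde{\rho}(\mu_i)$, produces a candidate point $(\mathbf{w},\mathbf{m}) \in (\Cbb^\times)^{n+h}$.

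Next I would verify the critical equations (\ref{eqn:critical}). By the very definition of $\Wbb$ developed in Section \ref{sec:PF}, the expression $\exp(w_j \,\partial_{w_j}\Wbb)$ equals the product of the shape parameters around a distinguished edge of the $j$-th ideal octahedron, i.e.\ the edge-gluing consistency relation at that edge. Since the $w_j$ arose from actual developed positions of a genuine $\psl$-representation, gluing consistency holds tautologically, so (\ref{eqn:critical}) is satisfied. Moreover, because the construction of $\rho_{\mathbf{w},\mathbf{m}}$ from the critical-point data in Theorem \ref{thm:Main1} inverts precisely the reading-off procedure used above, we recover a representation conjugate to $\rho$.

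The main obstacle I anticipate is verifying the non-degeneracy of the constructed $(\mathbf{w},\mathbf{m})$: each $w_j$ must be nonzero and avoid the additional exceptional loci that the notion of ``non-degenerate point'' in Section \ref{sec:PF} rules out. Any such degeneration would force two of the developed ideal vertices at some crossing to collapse in $\mathbb{CP}^1$, and by the equivariance of the developing map the collapse propagates along the diagram. Using connectedness of the region adjacency graph together with the hypothesis $\rho(\mu_i) \neq \pm I$, one should be able to conclude that any global collapse ultimately forces $\rho(\mu_{i_0}) = \pm I$ for some $i_0$, contradicting the hypothesis. A minor additional check is that the output is independent of the choice of $\sl$-lifting: two lifts differ by central signs on meridians, and these are absorbed by the ``up to sign'' clause on the eigenvalues $m_i$ in Theorem \ref{thm:Main1}.
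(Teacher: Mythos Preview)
Your outline has the right flavor but misses the mechanism the paper actually uses, and as written it will not close.

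The first gap is in what the $w_j$ are geometrically. In the octahedral decomposition the regional edges $e_j$ run from the auxiliary vertex $q$ below the diagram to the auxiliary vertex $p$ above it; they do not terminate at link cusps. Accordingly $w_j$ is not a cross-ratio at a crossing but the Ptolemy coordinate $c(e_j)=\det(W,V_j)$, where $W$ is a decoration vector at (a lift of) $p$ and $V_j$ is a decoration vector at the $q$-endpoint of (a lift of) $e_j$. Your scheme of assigning a single $\mathbb{CP}^1$-point to each region, taken to be an eigenline of a meridian, does not produce these determinants: you have no analogue of $W$, regions are bounded by several arcs so there is no single meridian attached to a region, and projective data alone cannot recover the individual $w_j$ (only the cross-ratio combinations $w_m/(m_\beta w_j)$, etc.). The paper instead fixes a lift $\widetilde{\rho}$, chooses a cocycle $\sigma$ associated to it, builds a \emph{decoration} $\Dcal:\widetilde{N}^0\to\Cbb^2\setminus\{0\}$, and sets $c(e)=\det(\Dcal(v_1),\Dcal(v_2))$; Proposition~\ref{prop:key} then reads off $(\mathbf{w},\mathbf{m})=(c(e_1),\dots,c(e_n),\overline{\sigma}(\mu_1),\dots,\overline{\sigma}(\mu_h))$.

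The second gap is the non-degeneracy argument. Your propagation-of-collapse idea does not match the actual failure modes: non-degeneracy requires $c(e)\neq 0$ for \emph{every} edge of $\Tcal$, and the paper shows these conditions translate into (i) $\det(W,V_j)\neq 0$, (ii) $W$ not an eigenvector of any $\rho(g_i)$, and (iii) no $V_j$ an eigenvector of any $\rho(g_i)$. These are finitely many proper subvarieties to avoid, and since $W$ and one initial $V_{j_0}$ can be chosen freely (the other $V_j$ are determined by $V_j=\rho(g)^{-1}V_k$ across arcs), a generic choice works; this is where $\rho(\mu_i)\neq\pm I$ enters, guaranteeing each $\rho(g_i)$ has at most two eigenlines rather than fixing all of $\mathbb{CP}^1$. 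Note also that $\rho(\mu_i)\neq\pm I$ does \emph{not} give two distinct fixed points (parabolics have one), so your ``canonical eigenline'' step is already shaky. Replace your CP$^1$ picture with the decoration/Ptolemy formalism and your collapse argument with the genericity count, and the proof goes through as in Section~\ref{sec:proof2}.
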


We remark that such a non-degenerate point $(\mathbf{w},\mathbf{m})$ can be explicitly constructed from a given representation $\rho$. See Examples \ref{ex:ex1} and \ref{ex:ex2}. 
We also stress that the assumption on $\sl$-lifting does not restrict too many cases. For instance, if $\textrm{tr}(\rho(\mu_i)) \neq 0$ for all $1\leq i \leq h$, then $\rho$ admits a lifting. In particular, any boundary parabolic representation has a lifting. Also, if $L$ is a knot, then any representation $\rho : \pi_1(S^3 \setminus L)\rightarrow \psl$ admits a lifting.

Let $M=S^3 \setminus \nu(L)$ be the link exterior where $\nu(L)$ denotes a tubular neighborhood of $L$.
For $\kappa=(\kappa_1,\cdots,\kappa_h) \in (\Qbb \cup \{\infty\} )^h$ we denote by $M_\kappa$ the manifold  obtained by Dehn filling along the slope $\kappa_i$ on each boundary torus of $M$. Here $\kappa_i=\infty$ means that we do not fill the corresponding boundary torus.

Let $\rho : \pi_1(M_\kappa) \rightarrow \psl$ be a representation.
If $M_\kappa$ has non-empty boundary, i.e. $\kappa_i=\infty$ for some $i$, we shall assume that $\rho$ is boundary parabolic so that the volume and Chern-Simons invariant of $\rho$ are well-defined. We refer  \cite{garoufalidis2015complex} for details. Regarding $\rho$ as a representation from $\pi_1(M)$ by compositing the inclusion $\pi_1(M)\rightarrow \pi_1(M_\kappa)$, we have
 \begin{equation} \label{eqn:df}
 \left\{
 \begin{array}{ll}
 \textrm{tr}(\rho(\mu_i))=\pm 2,\,\, \textrm{tr}(\rho(\lambda_i))=\pm 2  & \textrm{for } \kappa_i=\infty \\[3pt]
 \rho (\mu_i^{r_i}\lambda_i^{s_i})= \pm I	& \textrm{for } \frac{r_i}{s_i}=\kappa_i \neq \infty
 \end{array}
 \right.
 \end{equation} where $r_i$ and $s_i$ are coprime integers. 
 
 If we assume that $\rho: \pi_1(M)\rightarrow \psl$ admits a $\sl$-lifting and $\rho(\mu_i) \neq \pm I$ for all $1 \leq  i \leq h$, then by Theorems \ref{thm:Main1} and \ref{thm:Main2} there exists a non-degenerate point $(\mathbf{w},\mathbf{m})$ such that $\rho_{\mathbf{w},\mathbf{m}} = \rho$ up to conjugation where  $m_i$ is an eigenvalue of $\rho(\mu_i)$. It follows from the equation (\ref{eqn:df}) that for $\kappa_i \neq \infty$ we have $m_i^{r_i} l_i^{s_i}=\pm1$ and thus $r_i\textrm{log}\, m_i+s_i\textrm{log}\, l_i \equiv 0$ in modulo $\pi \sqrt{-1}$ where $l_i$ is an eigenvalue $\rho(\lambda_i)$. From coprimeness of the pair $(r_i,s_i)$, there are integers  $u_i$ and $v_i$ satisfying
 \[r_i \textrm{log}\,m_i + s_i \textrm{log}\,l_i + \pi \sqrt{-1}( r_i u_i + s_i v_i)=0.\]

\begin{thm}\label{thm:Main3} The volume and Chern-Simons invariant of $\rho : \pi_1(M_\kappa)\rightarrow \psl$ are given by \[\sqrt{-1}(\textrm{Vol}(\rho)+ \sqrt{-1}\mkern1mu\textrm{CS}(\rho)) \equiv \Wbb_0(\mathbf{w},\mathbf{m}) \quad \textrm{mod } \pi^2 \Zbb\] where the function $\Wbb_0(w_1,\cdots,w_{n},m_1,\cdots,m_h)$ is defined by
	\begin{equation*}
	\begin{array}{l}
	\Wbb_0:=\Wbb(w_1,\cdots,w_{n},m_1,\cdots,m_h)-\displaystyle\sum_{j=1}^{n}  \mkern-2mu \left(w_j \dfrac{\partial \Wbb}{\partial w_j}\right)\textrm{log}\,w_j\\[10pt]
	\quad \quad \quad  - \mkern -3mu \displaystyle\sum_{ \kappa_i\neq \infty} \mkern-3mu \left[\left(m_i \dfrac{\partial \Wbb}{\partial m_i}\right)(\textrm{log}\,m_i + u_i \pi \sqrt{-1}) -\dfrac{r_i}{s_i}(\textrm{log}\,m_i + u_i \pi \sqrt{-1})^2 \right].
	\end{array}
	\end{equation*}
\end{thm}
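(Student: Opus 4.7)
The plan is to decouple Theorem \ref{thm:Main3} into two parts: a \emph{cusped} formula, corresponding to all $\kappa_i = \infty$, and a \emph{Dehn-filling correction} for each filled boundary torus.

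First I would treat the purely cusped case. When all $\kappa_i = \infty$, the last sum in $\Wbb_0$ is empty, and by (\ref{eqn:df}) the representation is boundary parabolic, so $m_i = \pm 1$ for each $i$. In this specialization $\Wbb$ should reduce to the Cho-Murakami potential function $W$, and the statement follows directly from property (ii) recalled in the introduction. The only real task here is to verify that the definition of $\Wbb$ restricts correctly at $m_i = \pm 1$ and that the modular ambiguities align.

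Second, I would establish the key differential identity
\begin{equation*}
m_i \, \dfrac{\partial \Wbb}{\partial m_i} \equiv \log l_i \pmod{\pi \sqrt{-1}}
\end{equation*}
at any non-degenerate critical point $(\mathbf{w},\mathbf{m})$ satisfying (\ref{eqn:critical}), where $l_i$ is the longitude eigenvalue of $\rho_{\mathbf{w},\mathbf{m}}(\lambda_i)$. This is the statement dual to the $w_j$-critical equations: it says that moving in the $m_i$ direction along the variety of solutions traces out the $A$-polynomial/deformation curve of the $i$-th cusp. I expect it to follow by a direct computation in the ideal triangulation (e.g.\ octahedral decomposition from the diagram) underlying the construction of $\Wbb$, in the same spirit as the identification of $W_0$ with the Neumann extended-Bloch-group expression for complex volume.

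Third, for each filled cusp I would apply a Neumann-Zagier style correction. The hypothesis $\rho(\mu_i^{r_i}\lambda_i^{s_i}) = \pm I$ together with the chosen integers $u_i, v_i$ gives
\begin{equation*}
\log l_i + v_i \pi \sqrt{-1} = -\dfrac{r_i}{s_i}(\log m_i + u_i \pi \sqrt{-1}).
\end{equation*}
Inserting this into the cusped formula and combining with the differential identity of the previous step should produce simultaneously the $-(m_i \partial\Wbb/\partial m_i)(\log m_i + u_i\pi\sqrt{-1})$ contribution, viewed as a Legendre transform dualizing the meridian eigenvalue to the longitude eigenvalue, and the quadratic $(r_i/s_i)(\log m_i + u_i\pi\sqrt{-1})^2$ contribution, which is the standard change in complex volume when an open cusp is replaced by a solid torus with prescribed core-holonomy.

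The principal obstacle I anticipate is the $\mathrm{mod}\,\pi^2 \Zbb$ bookkeeping: one must show that the final expression is independent of the chosen $\sl$-lifting of $\rho$, of the branches of the logarithms $\log m_i$ and $\log w_j$, and of the auxiliary integers $u_i, v_i$, with every $\pi\sqrt{-1}$ shift surviving the Legendre transformation as either an exact $\pi^2\Zbb$ term or a cancellation. A secondary obstacle is making the identification $m_i\partial\Wbb/\partial m_i \equiv \log l_i$ fully rigorous in the explicit combinatorial model of $\Wbb$, since both the cusped extension and the Dehn-filling quadratic correction hinge on it.
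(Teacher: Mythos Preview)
Your proposal outlines a genuinely different route from the paper's, but it contains a structural gap.

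The paper does not reduce to Cho--Murakami and then add Dehn-filling corrections. It instead invokes a formula from \cite{yoon2018volume} expressing $\sqrt{-1}(\textrm{Vol}(\rho)+\sqrt{-1}\,\textrm{CS}(\rho))$ directly as a sum $\sum_\Delta \epsilon_\Delta R(\Delta)$ of extended Rogers dilogarithms over the tetrahedra of $\Tcal$, where the logarithmic data are packaged in a cocycle $a:\partial N^1\to\Cbb$ satisfying $r_i\overline{a}(\mu_i)+s_i\overline{a}(\lambda_i)=0$. The proof then computes this sum crossing by crossing, splitting each crossing's contribution into five blocks (A-- through E--parts): the A--part yields $\Wbb-\sum_j(w_j\,\partial\Wbb/\partial w_j)\log w_j-\sum_i(m_i\,\partial\Wbb/\partial m_i)\,\overline{a}(\mu_i)$; the D--part assembles to $-\sum_i\overline{a}(\mu_i)\overline{a}(\lambda_i)$, which becomes the quadratic $(r_i/s_i)(\log m_i+u_i\pi\sqrt{-1})^2$ term via the filling relation; and the B--, C--, E--parts cancel by combinatorial arguments on over/under-arcs. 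The differential identity you isolate as Step~2 is never proved or used in isolation; the quadratic filling term arises from the D--part, not from a Legendre transform of $m_i\,\partial\Wbb/\partial m_i$.

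The gap in your plan is that your Step~1 only establishes the theorem at $m_i=\pm1$, since Cho--Murakami applies solely to boundary-parabolic representations. But for $\kappa_i\neq\infty$ the representation $\rho:\pi_1(M)\to\psl$ has generic $m_i$, and there is no ``cusped formula'' for its complex volume in the Cho--Murakami sense; indeed the complex volume of a non-boundary-parabolic representation of $\pi_1(M)$ is not defined without exactly the extra logarithmic choices $\overline{a}(\mu_i),\overline{a}(\lambda_i)$ that the paper builds in from the start. To pass from $m_i=1$ to general $m_i$ you would have to integrate your Step~2 identity along a deformation in the character variety and pin down the integration constant mod $\pi^2\Zbb$, and that is where the real content sits. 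In effect your Step~3 presupposes a version of the theorem for $M$ with arbitrary meridian eigenvalues, which is what you are trying to prove. The paper sidesteps this circularity by extracting both the ``bulk'' term and the filling correction simultaneously from the single Rogers-dilogarithm sum.
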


\subsection{Organization of the paper}
The paper is organized as follows. In Section \ref{sec:PF}, we give a definition of a generalized potential function and prove Theorem~\ref{thm:Main1}. 
We present main computation of the proof in Section \ref{sec:proof1}.
In Section \ref{sec:VCI}, we recall the notion of a deformed Ptolemy assignment \cite{yoon2018volume}, which is a main ingredient of this paper. We then prove Theorems \ref{thm:Main2} and \ref{thm:Main3} in Sections \ref{sec:proof2} and \ref{sec:VCSI}, respectively. We also present some examples that show how our theorems work in practical computation at the end of the paper.

\subsection{Acknowledgment} The author would like to thank Hyuk Kim for his guidance and encouragement.
 The author also would like to thank Jinseok Cho and Seonhwa Kim for their helpful discussion regarding Theorem \ref{thm:Main2}.
	
\section{Potential functions} \label{sec:PF}
Let $L$ be a link in $S^3$ with $h$ components.
Throughout the paper, we fix an oriented diagram, denoted also by $L$, of $L$.
We assume that every component of $L$ has at least one over-passing crossing and at least one under-passing crossing. This can be achieved by applying Reidemeister moves, if necessary.
We denote the number of the regions of $L$ by $n$.	

We assign a complex variable $w_j$ ($1\leq j \leq n$) to each region of $L$ and let $\mathbf{w}=(w_1,\cdots,w_{n})$. We also assign a complex  variable $m_i$ ($1 \leq i\leq h$) to each component of $L$ and let $\mathbf{m}=(m_1,\cdots,m_h)$.
For notational simplicity, we enumerate a region and a component of $L$ by the index of the variables assigned to them.
 For a crossing, say $c$, of $L$ we define
\allowdisplaybreaks
\begin{align*}
\Wbb_c(\mathbf{w},\mathbf{m}) &:= \textrm{Li}_2\left(\dfrac{w_m}{m_\beta w_j}\right)+\textrm{Li}_2\left(\dfrac{w_k}{m_\alpha w_j}\right)-\textrm{Li}_2\left(\dfrac{w_l}{m_\beta w_k}\right)-\textrm{Li}_2\left(\dfrac{w_l}{m_\alpha w_m}\right)\\
& \quad +\textrm{Li}_2\left(\dfrac{w_jw_l}{w_mw_k}\right)-\dfrac{\pi^2}{6}+\textrm{log}\left(\dfrac{w_m}{m_\beta w_j} \right)\textrm{log}\left(\dfrac{w_k}{m_\alpha w_j}\right)
\end{align*}
for Figure \ref{fig:crossing}(a) and 
\begin{align*}			
\Wbb_c(\mathbf{w},\mathbf{m}) &:=-\textrm{Li}_2\left(\dfrac{m_\beta w_m}{w_j}\right)-\textrm{Li}_2\left(\dfrac{m_\alpha w_k}{w_j}\right)+\textrm{Li}_2\left(\dfrac{m_\beta w_l}{w_k}\right)+\textrm{Li}_2\left(\dfrac{m_\alpha w_l}{w_m}\right)\\
&\quad -\textrm{Li}_2\left(\dfrac{w_jw_l}{w_mw_k}\right)+\dfrac{\pi^2}{6}-\textrm{log}\left(\dfrac{m_\beta w_m}{w_j} \right)\textrm{log}\left(\dfrac{m_\alpha w_k}{w_j}\right)
\end{align*} for Figure \ref{fig:crossing}(b). Recall that the dilogarithm function is given by $\textrm{Li}_2(z) = -\int_0^z \frac{\textrm{log} \, (1-t)}{t} dt$. See, for instance, \cite{zagier2007dilogarithm}. 
We then define the \emph{generalized potential function} \[\Wbb(\mathbf{w},\mathbf{m}) := \mkern -10mu\sum_{\textrm{crossing }c} \mkern -10mu\Wbb_{c}(\mathbf{w},\mathbf{m})\] where the sum is over all crossings of $L$.   Here and throughout the paper, we fix a branch of the logarithm; for actual computation we will use the principal branch having the imaginary part in the interval $(-\pi,\pi]$. 
\begin{figure}[!h]
	\centering
	\scalebox{1}{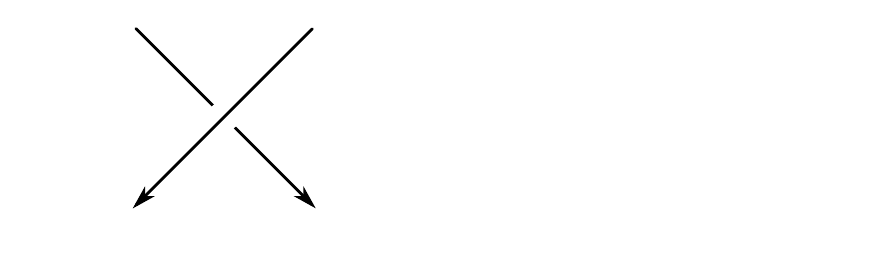} 
	\caption{Variables around a crossing}
	\label{fig:crossing}
\end{figure}
\begin{rmk} The generalized potential function $\Wbb$  reduces to the potential function $W$ in \cite{cho_2016,cho2016optimistic}  when $m_1=\cdots=m_h=1$.
\end{rmk}

\begin{defn} \label{defn:sol} (i) A point $(\mathbf{w},\mathbf{m})\in (\Cbb \setminus \{0\})^{n+h}=(\Cbb ^\times)^{n+h}$ is called a \emph{solution} if 
	\begin{equation} \label{eqn:diff}
	\textrm{exp} \left(w_j \dfrac{\partial \Wbb}{\partial w_j} \right) =1 \quad \textrm{for all } 1 \leq j \leq n.
	\end{equation}
	(ii) A point $(\mathbf{w},\mathbf{m})$ is said to be \emph{non-degenerated} if the following five values are not $1$ at each crossing of $L$:
	\begin{equation}\label{eqn:nond}
	\left\{
	\begin{array}{ll}
	\dfrac{w_m}{m_\beta w_j},\, \dfrac{w_k}{m_\alpha w_j},\, \dfrac{w_l}{m_\beta w_k},\, \dfrac{w_l}{m_\alpha w_m} ,\, \dfrac{w_j w_l}{w_m w_k} & \textrm{ for Figure \ref{fig:crossing}(a)} \\[15pt]
	\dfrac{m_\beta w_m}{w_j} ,\,\dfrac{m_\alpha w_k}{w_j},\, \dfrac{m_\beta w_l}{w_k} ,\, \dfrac{m_\alpha w_l}{w_m} ,\, \dfrac{w_j w_l}{w_k w_m}  &\textrm{ for Figure \ref{fig:crossing}(b)}.
	\end{array} 
	\right.
	\end{equation}	
\end{defn}

\begin{thm}[Theorem \ref{thm:Main1}]\label{thm:main1} A non-degenerate solution $(\mathbf{w},\mathbf{m})$ corresponds to a representation $\rho_{\mathbf{w},\mathbf{m}} : \pi_1(S^3 \setminus L)\rightarrow \psl$ such that the eigenvalues of $\rho_{\mathbf{w},\mathbf{m}}(\mu_i)$ are $m_i$ and $m_i^{-1}$ up to sign for all $1 \leq i \leq h$.  Here $\mu_i$ denotes a meridian of the $i$-th component of $L$.
\end{thm}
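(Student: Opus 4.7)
My plan is to follow the Yokota--Cho--Murakami strategy, adapted to track the meridian parameters $m_i$. The starting point is to associate to each crossing an ideal octahedron, obtained from the standard octahedral decomposition of $S^3\setminus L$, and to refine it into five ideal tetrahedra whose shape parameters are precisely the five cross--ratios appearing as arguments of the five dilogarithms in $\Wbb_c(\mathbf{w},\mathbf{m})$; for a positive crossing these are
\[ z_1=\frac{w_m}{m_\beta w_j},\quad z_2=\frac{w_k}{m_\alpha w_j},\quad z_3=\frac{w_l}{m_\beta w_k},\quad z_4=\frac{w_l}{m_\alpha w_m},\quad z_5=\frac{w_jw_l}{w_mw_k}, \]
with the reciprocal assignment at a negative crossing. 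The non-degeneracy hypothesis (\ref{eqn:nond}) is exactly the condition that every $z_i\in\Cbb\setminus\{0,1\}$, so each tetrahedron is non-degenerate and admits a well-defined shape.

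First I would compute $w_j\,\partial\Wbb/\partial w_j$ directly from the definition. Using $\tfrac{d}{dz}\textrm{Li}_2(z)=-\textrm{log}(1-z)/z$, each crossing $c$ incident to region $j$ contributes, after multiplication by $w_j$, a sum of $\pm\textrm{log}(1-z_i)$ and $\pm\textrm{log}\,z_i$ terms whose precise combination depends on which of the four corner-positions $(j,k,l,m)$ the region occupies at $c$. After exponentiation these assemble into a product of shape parameters (and of $1-z_i$'s) around the edge of the refined triangulation that is dual to region $j$. The critical-point equation $\textrm{exp}(w_j\,\partial\Wbb/\partial w_j)=1$ is then identified with the Thurston gluing consistency relation along that edge; verifying this identification case by case (for each of the four corner-positions and for both crossing signs) is the bookkeeping that the author defers to Section~\ref{sec:proof1}.

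Once all edge gluing equations are known to hold at a non-degenerate solution, I would invoke the standard developing--map/holonomy construction (\`a la Thurston--Neumann--Zickert) to produce a representation $\rho_{\mathbf{w},\mathbf{m}}:\pi_1(S^3\setminus L)\to\psl$ determined up to conjugation. To compute the meridian eigenvalues, I would realise $\mu_i$ as a small loop encircling a strand of the $i$-th component and trace its holonomy through the octahedra it meets; the $m_\alpha$ and $m_\beta$ insertions in $z_1,\dots,z_4$ are placed so that precisely one factor of $m_i$ enters per crossing along the strand, and the remaining $w$-factors telescope around the meridian. The resulting monodromy is conjugate to a diagonal matrix with entries $\pm m_i^{\pm 1}$, giving the claim.

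The hardest part will be the second step: matching $\textrm{exp}(w_j\,\partial\Wbb/\partial w_j)$ with the correct edge gluing equation, keeping careful track of the $m_\alpha,m_\beta$ insertions and of the ``equatorial'' shape $w_jw_l/(w_mw_k)$ that couples the four side-tetrahedra at each crossing. A related subtlety is that the sign conventions in $\Wbb_c$ (and the $\pm\pi^2/6$ and $\pm\textrm{log}\cdot\textrm{log}$ correction terms) must combine so that the shape-parameter product around each edge is $+1$ rather than only $\pm1$; this is what is needed to pin the holonomy down as a genuine $\psl$-representation and to identify the meridian eigenvalues with $m_i$ up to sign, as asserted.
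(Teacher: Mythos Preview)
Your plan follows the paper's own proof quite closely: the same octahedral decomposition refined into five tetrahedra per crossing, the same identification of shape parameters with the dilogarithm arguments, and the same holonomy-based extraction of the meridian eigenvalues. The paper carries out exactly the crossing-by-crossing computation you describe, introducing a local quantity $\tau_{c,j}$ (the product of the three cross-ratios at crossing $c$ around the horizontal edge for region $j$) and verifying directly that $\tau_{c,j}=\exp\!\big(w_j\,\partial\Wbb_c/\partial w_j\big)$, so that the regional edge equation is precisely the critical-point condition~(\ref{eqn:diff}).

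There is, however, one genuine omission in your outline. You identify the critical-point equations only with the \emph{regional} edge gluing equations, then write ``once all edge gluing equations are known to hold''. But the octahedral decomposition $\Ocal$ also has \emph{over-edges} and \emph{under-edges} (one per over-arc and under-arc of the diagram), and the five-tetrahedron subdivision introduces further internal edges. None of these are touched by the equations $\exp(w_j\,\partial\Wbb/\partial w_j)=1$. In the paper these extra edge equations are checked separately and shown to hold \emph{identically} in the variables $(\mathbf{w},\mathbf{m})$: the internal subdivision edges because the three cross-ratios meeting there multiply to $1$ by construction, and the over/under-edges because the cross-ratios along an arc telescope (this is where the specific placement of the $m_\alpha,m_\beta$ factors matters). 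These verifications are elementary but not automatic, and without them you cannot conclude that the shapes glue to a developing map. Your meridian computation is also slightly imprecise: the paper localises $\mu_i$ at one spot where the strand changes from over to under (Figure~\ref{fig:eigen}), so that it traverses exactly two tetrahedra and the scaling factor is the single product $\big(\tfrac{w_j}{m_i w_k}\big)^{-1}\tfrac{m_i w_j}{w_k}=m_i^2$; there is no telescoping over multiple crossings involved.
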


\subsection{Proof of Theorem \ref{thm:main1}} \label{sec:proof1}
Recall that we fixed a diagram of $L$ such that every component has at least one over-passing crossing and at least one under-passing crossing. Hence the space $S^3 \setminus (L \cup \{p,q\})$ decomposes into ideal octahedra (one per crossing) where $p \neq q \in S^3$ are two points not in $L$. We denote by $\Ocal$ this octahedral decomposition. It was introduced in \cite{thurston1999hyperbolic} and can be found in several articles, such as  \cite{yokota2002potential,weeks2005computation,cho2016optimistic,kim2016octahedral}. 
Following \cite{cho2016optimistic}, we subdivide each ideal octahedron into five ideal tetrahedra as in Figures \ref{fig:cross_ratio_pos} and \ref{fig:cross_ratio_neg} and denote by $\Tcal$ the resulting ideal triangulation of  $S^3 \setminus (L \cup \{p,q\})$. 

Recall that an ideal tetrahedron with mutually distinct vertices $z_0,z_1,z_2,z_3 \in \partial \Hb = \Cbb \cup \{\infty\}$ is determined up to isometry by the cross-ratio
\[z=[z_0:z_1:z_2:z_3]=\frac{(z_0-z_3)(z_1-z_2)}{(z_0-z_2)(z_1-z_3)} \in \Cbb \setminus \{0,1\},\]where the cross-ratio at each edge is given by one of $z, z'=\frac{1}{1-z}$, and $z''=1-\frac{1}{z}$ as in Figure~\ref{fig:cross}. 
\begin{figure}[!h] 
	\centering
	\scalebox{1}{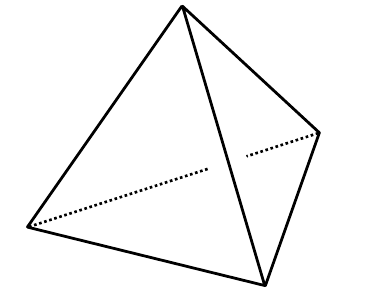} 
	\caption{Cross-ratios at the edges}
	\label{fig:cross}
\end{figure}
	
For a given non-degenerate solution $(\mathbf{w},\mathbf{m})=(w_1,\cdots,w_{n},m_1,\cdots,m_h)$, we assign the cross-ratio to each ideal tetrahedron of $\Tcal$ as in Figures \ref{fig:cross_ratio_pos} and \ref{fig:cross_ratio_neg}. 
The equation (\ref{eqn:nond}) guarantees that these tetrahedra are non-degenerated.
The product of the cross-ratios around each of edges that are created to divide the octahedra into tetrahedra is $1$ :
\begin{equation*}
\left\{
\begin{array}{ll}
\dfrac{w_m}{m_\beta w_j} \dfrac{m_\beta w_k}{w_l} \dfrac{w_j w_l}{w_m w_k} = 1 =  \dfrac{w_k}{m_\alpha w_j} \dfrac{m_\alpha w_m}{w_l} \dfrac{w_j w_l}{w_m w_k} & \textrm{ for Figure \ref{fig:crossing}(a)} \\[15pt]
\dfrac{m_\alpha w_l}{w_m} \dfrac{w_j}{m_\alpha w_k} \dfrac{w_k w_m}{w_j w_l} = 1 = \dfrac{w_j}{m_\beta w_m} \dfrac{m_\beta w_l}{w_k} \dfrac{w_k w_m}{w_j w_l} &\textrm{ for Figure \ref{fig:crossing}(b)}.
\end{array} 
\right.
\end{equation*}	 Therefore, at each crossing, five tetrahedra are well-glued to form an  octahedron. 
\begin{figure}[!h] 
	\centering
	\scalebox{1}{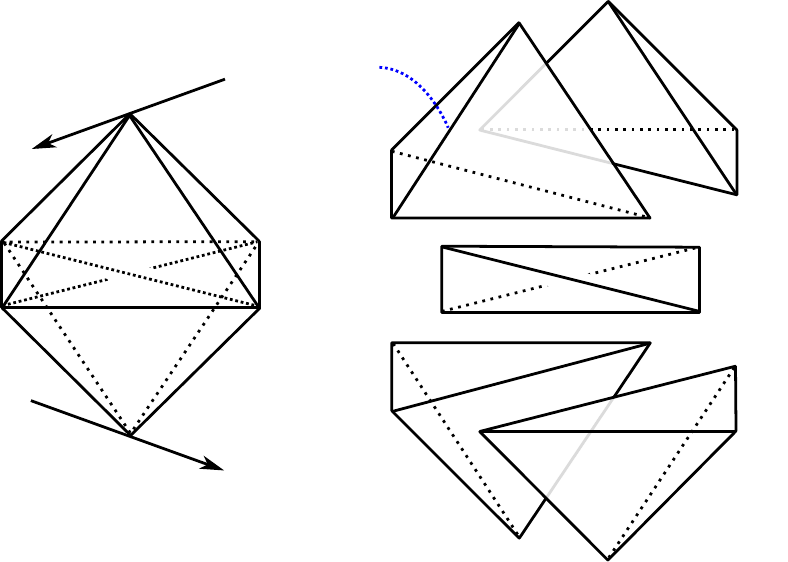} 
	\caption{Cross-ratios for Figure \ref{fig:crossing}(a)}
	\label{fig:cross_ratio_pos}
\end{figure}
\begin{figure}[!h] 
	\centering
	\scalebox{1}{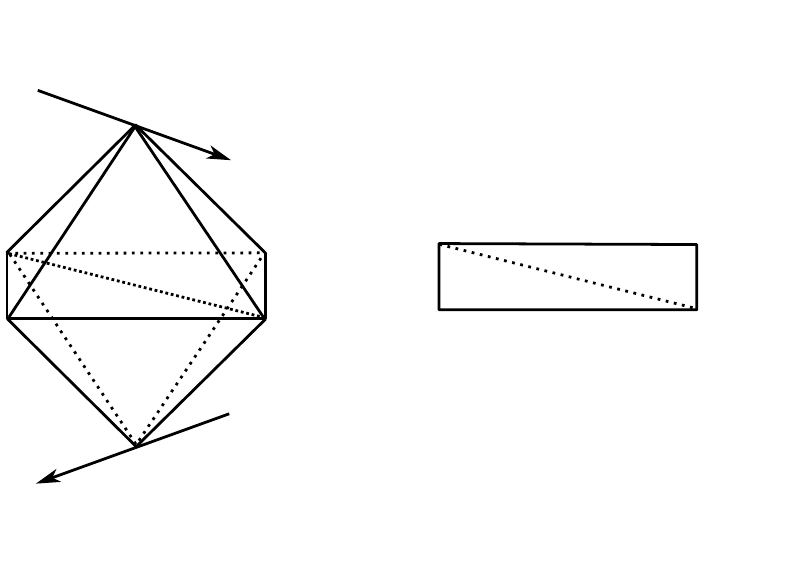} 
	\caption{Cross-ratios for Figure \ref{fig:crossing}(b)}
	\label{fig:cross_ratio_neg}
\end{figure}

We now check that cross-ratios given as in Figures \ref{fig:cross_ratio_pos} and \ref{fig:cross_ratio_neg} satisfy the gluing equations for the octahedral decomposition $\Ocal$, i.e. the product of the cross-ratios around each edge of $\Ocal$ is $1$. We thus shall obtain a representation \[\rho_{\mathbf{w},\mathbf{m}} : \pi_1(S^3 \setminus (L\cup\{p,q\})) = \pi_1(S^3 \setminus L) \rightarrow \psl\] up to conjugation as a holonomy representation.
Note that a similar computation can be found in \cite{cho2016optimistic} and \cite{kim2016octahedral}
 
Recall that $L$ has $n$ regions, so $n-2$ crossings. It thus has $n-2$ over-arcs and $n-2$ under-arcs. Here an over (resp., under)-arc is a maximal part of $L$ that does not under (resp., over)-pass a crossing. See Figure \ref{fig:local_diagram}. Recall also that the octahedral decomposition $\Ocal$ has $3n-4$ edges; (i)  $n$ \emph{regional edges} corresponding to the regions; (ii) $n-2$ \emph{over-edges} corresponding to the over-arcs;
(iii) $n-2$ \emph{under-edges} corresponding to the under-arcs.  We refer \cite[\S 3]{kim2016octahedral} for details.
\begin{figure}[!h] 
	\centering
	\scalebox{1}{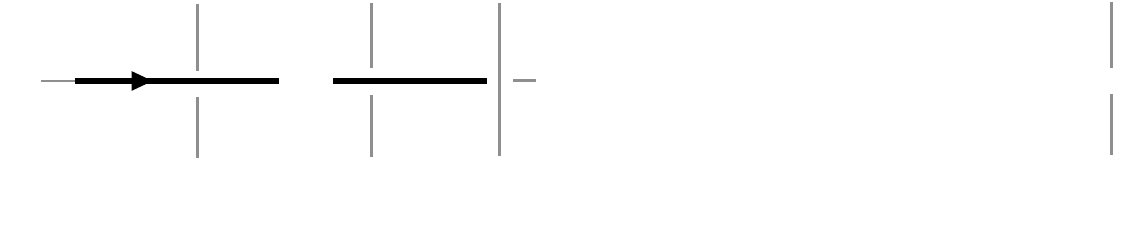} 
	\caption{Over- and under-arcs}
	\label{fig:local_diagram}
\end{figure}

Suppose an over-arc of $L$ over-passes $m$ crossings as in Figure \ref{fig:local_diagram}(a). Then around the corresponding over-edge, there are $4m+2$ cross-ratios; each of the over-passed crossings contributes 4 cross-ratios, and two crossings coming from the ends of the over-arc respectively contributes one cross-ratio (cf. Figure 10 in \cite{kim2016octahedral}). The product of these cross-ratios is 
\[\left(\dfrac{w_{j_1}}{m_i w_{j_2}}\right) \cdot \left(\dfrac{m_i w_{j_1}}{w_{j_2}}  \dfrac{w_{j_4}}{m_i w_{j_3}} \right)^{-1} \cdots \left(\dfrac{m_i w_{j_{2m-1}}}{w_{j_{2m}}}  \dfrac{w_{j_{2m+2}}}{m_i w_{j_{2m+1}}} \right)^{-1} \cdot \left(\dfrac{m_iw_{j_{2m+2}}}{ w_{j_{2m+1}}}\right)=1\]
for Figure \ref{fig:local_diagram}(a). Similarly, the product of cross-ratios around an under-edge is $1$ :
\[\left(\dfrac{w_{j_2}}{m_i w_{j_1}}\right) \cdot \left(\dfrac{m_i w_{j_2}}{w_{j_1}}  \dfrac{w_{j_3}}{m_i w_{j_4}} \right)^{-1} \cdots \left(\dfrac{m_i w_{j_{2m}}}{w_{j_{2m-1}}}  \dfrac{w_{j_{2m+1}}}{m_i w_{j_{2m+2}}} \right)^{-1} \cdot \left(\dfrac{m_i w_{j_{2m+1}}}{ w_{j_{2m+2}}}\right)=1\]
for Figure \ref{fig:local_diagram}(b).

Suppose a region of $L$ has $m$ crossings (or corners). The corresponding regional edge is represented by a horizontal edge of the octahedron at each of these crossings.
Therefore, there are $3m$ cross-ratios around the regional edge. See Figures \ref{fig:cross_ratio_pos} and \ref{fig:cross_ratio_neg} that three cross-ratios are attached to each horizontal edge. 
Let $\tau_{c,j}$ be the product of cross-ratios coming from a crossing $c$ and attached to the regional edge corresponding to the $j$-th region.
Then it is clear that the product of the cross-ratios around the regional edge corresponding to the $j$-th region is given by
\begin{equation}\label{eqn:tau}\prod_{\textrm{crossing } c} \mkern -10mu \tau_{c,j}\end{equation}
where the product is over all crossings appeared in the $j$-th region. On the other hand, $\tau$-values can be directly computed as follows from the cross-ratios given in Figures  \ref{fig:cross_ratio_pos} and~\ref{fig:cross_ratio_neg} :
 \begin{equation*} 
\left\{
\begin{array}{ll} 
\tau_{c,l}=\dfrac{(\frac{1}{m_\beta}w_l- w_k)(\frac{1}{m_\alpha}w_l- w_m)}{w_k w_m-w_j w_l}, & 		\tau_{c,k}=\dfrac{w_j w_l-w_k w_m}{( \frac{1}{m_\alpha}w_k- w_j)(m_\beta  w_k- w_l)}	 \\[12pt]	
\tau_{c,m}=\dfrac{w_j w_l-w_k w_m}{(\frac{1}{m_\beta}w_m-  w_j)(m_\alpha w_m- w_l)} , &
\tau_{c,j}=\dfrac{(m_\alpha w_j-w_k)( m_\beta  w_j-w_m)}{w_k w_m-w_j w_l}  \\[12pt]	 	 
\end{array} \right.
\end{equation*} for Figure \ref{fig:crossing}(a) and 
\begin{equation*}
\left\{
\begin{array}{ll} 
\tau_{c,l}= \dfrac{w_k w_m-w_j w_l}{(m_\beta w_l- w_k)(m_\alpha w_l-w_m)},& \tau_{c,k}=\dfrac{(m_\alpha w_k- w_j)(\frac{1}{m_\beta} w_k-w_l	)}{w_j w_l-w_k w_m}
\\[12pt]	
\tau_{c,m}=\dfrac{(m_\beta w_m- w_j)(\frac{1}{m_\alpha} w_m- w_l)}{w_j w_l-w_k w_m},& \tau_{c,j}= \dfrac{w_k w_m-w_j w_l}{(\frac{1}{m_\alpha} w_j-  w_k)(\frac{1}{m_\beta}w_j-w_m)}  \\[12pt]	 	 
\end{array} \right.
\end{equation*} for Figure \ref{fig:crossing}(b). Furthermore, a straightforward computation shows that \[ \tau_{c,j}=\textrm{exp} \left(w_j \frac{\partial \Wbb_c}{\partial w_j} \right)\] holds for any crossing $c$ and any region.
It thus follows from the equation (\ref{eqn:critical}) that the $\tau$-product in the equation (\ref{eqn:tau}) is 1, i.e. the product of the cross-ratios around each regional edges is~$1$.
\begin{rmk} Rewriting the equation (\ref{eqn:diff}) as the equation (\ref{eqn:tau}), one can checked that the equation (\ref{eqn:diff}) is invariant under change $m_i \mapsto \frac{1}{m_i}$ for all $1 \leq i\leq h$.	
\end{rmk}

We finally claim that the eigenvalues of $\rho_{\mathbf{w},\mathbf{m}}(\mu_i)$ are $m_i$ and $m_i^{-1}$. Since we assume that each component of $L$ has at least one over-passing crossing and at least one under-passing crossing, it contains a local diagram as in Figure \ref{fig:eigen}~(left). Then a meridian $\mu_i$ (up to base point) passes through two ideal tetrahedra coming from the ends as in Figure \ref{fig:eigen}~(middle). Therefore, the scaling factor of the holonomy action for $\mu_i$  is given by the product of two cross-ratios \[\left(\frac{w_j}{m_i w_k}\right)^{-1} \frac{m_iw_j}{w_k} = m_i^2.\] It follows that the eigenvalues of  $\rho_{\mathbf{w},\mathbf{m}}(\mu_i)  \in \psl$ are $m_i$ and $m_i^{-1}$ up to sign.
\begin{figure}[!h] 
	\centering
	\scalebox{1}{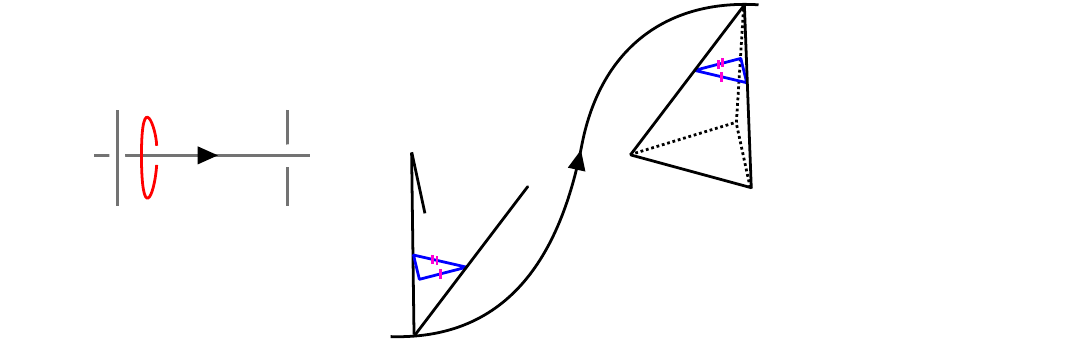} 
	\caption{A meridian }
	\label{fig:eigen}
\end{figure}

\section{Deformed Ptolemy assignments} \label{sec:VCI}

Let us briefly recall the notion of a deformed Ptolemy assignment \cite{yoon2018volume}, which is the key ingredient for proving Theorems \ref{thm:Main2} and \ref{thm:Main3}.

We let $\Tcal$ be an ideal triangulation of $S^3 \setminus (L \cup \{p,q\})$ given in Section \ref{sec:PF}. Replacing each ideal tetrahedron of $\Tcal$ by a truncated tetrahedron, we obtain a compact 3-manifold, say $N$, whose interior is homeomorphic to $S^3 \setminus (L \cup \{p,q\})$.
Here a \emph{truncated tetrahedron} is a polyhedron obtained from a tetrahedron by chopping off a small neighborhood of each vertex; see Figure \ref{fig:tetrahedron}.
Note that the boundary $\partial N$ is triangulated and is consisted of $h$ tori with two spheres.
We denote by $N^i$ and $\partial N^i$ the set of the oriented $i$-cells (unoriented when $i=0$) of $N$ and $\partial N$, respectively. We call an 1-cell of $\partial N$ a \emph{short edge} and call an 1-cell of $N$ not in $\partial N$ a \emph{long edge}. 

An assignment $\sigma : \partial N^1 \rightarrow \Cbb^\times$ is called a \emph{cocycle} if (i) $\sigma(e) \sigma(-e)=1$ for all $e \in \partial N^1$; (ii) $\sigma(e_1)\sigma(e_2)\sigma(e_3)=1$ whenever $e_1,e_2$, and $e_3$ bound, respecting an orientation, a 2-cell in $\partial N$. Here $-e$ denote the same 1-cell $e$ with its opposite orientation. A cocycle $\sigma : \partial N^1 \rightarrow \Cbb^\times$ induces a homomorphism $\pi_1(\Sigma) \rightarrow \Cbb^\times$ on each component $\Sigma$ of $\partial N$. For notational simplicity we denote all of such homomorphisms by $\overline{\sigma}$.

We denote by $\Tcal^1$ the set of the oriented $1$-cells of $\Tcal$ and identify each edge of $\Tcal$ with a long-edge of $N$ in a natural way (as in Figure \ref{fig:tetrahedron}). 
\begin{defn}[\cite{yoon2018volume}] \label{dfn:ptl}
For a given cocycle $\sigma : \partial N^1 \rightarrow \Cbb^\times$, an assignment $ c : \Tcal^1\rightarrow \Cbb^\times$ is called a \emph{$\sigma$-deformed Ptolemy assignment} if $c(-e)=-c(e)$ for all $e \in \Tcal^1$ and 
\begin{equation*} 
	c(l_3)c(l_6)=\dfrac{\sigma(s_{23})}{\sigma(s_{35})}\dfrac{\sigma(s_{26})}{\sigma(s_{65})}c(l_2)c(l_5)+\dfrac{\sigma(s_{13})}{\sigma(s_{34})}\dfrac{\sigma(s_{16})}{\sigma(s_{64})}c(l_1)c(l_4)
\end{equation*}
for each ideal tetrahedron $\Delta$ of $\Tcal$. Here $l_i$'s denote 1-cells of $\Delta$ and $s_{ij}$ denotes the 1-cell in $\partial N \cap \Delta$ running from $l_i$ to $l_j$ as in Figure \ref{fig:tetrahedron}. 
\begin{figure}[!h]
	\centering
	\scalebox{1}{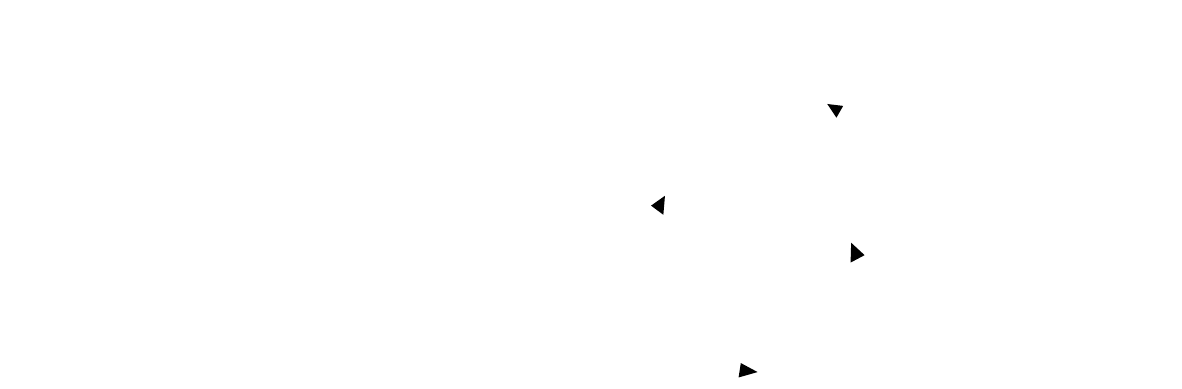}
	\caption{A truncated tetrahedron}
	\label{fig:tetrahedron}
\end{figure}
\end{defn}
It is proved in \cite{yoon2018volume} that a $\sigma$-deformed Ptolemy assignment $c$ corresponds to an assignment $\phi : N^1 \rightarrow \sl$ satisfying cocycle condition. It thus  corresponds to a representation $\rho_c : \pi_1(N)\rightarrow\sl$ up to conjugation. The cocycle $\phi$ can be explicitly given as follows:
  	\begin{equation*}
 	\phi(l_{j}) = \begin{pmatrix}
 	0 & -c(l_{j})^{-1} \\ c(l_{j}) & 0
 	\end{pmatrix}	
 	,\quad
 	\phi(s_{ij}) =\pm \begin{pmatrix}
 	\sigma(s_{ij}) & -\dfrac{\sigma(s_{ki})}{\sigma(s_{jk})} \dfrac{c(l_{k})}{c(l_{i}) c(l_{j})} \\[10pt] 0 & \sigma(s_{ij})^{-1}
 	\end{pmatrix}
 	\end{equation*} where the index $k$ is chosen so that $l_k$ and $s_{ij}$ lie on the same 2-cell. Also, $c$ determines the cross-ratio of each ideal tetrahedron of $\Tcal$. See Proposition 2.14 in \cite{yoon2018volume}. For instance, the cross-ratio at $l_3$ in Figure \ref{fig:tetrahedron} is given by 
	\[ 	\dfrac{\sigma(s_{12})\sigma(s_{45})}{\sigma(s_{24})\sigma(s_{51})}\,\dfrac{c(l_1) c(l_4)}{c(l_2)c(l_5)} \in \Cbb \setminus \{0,1\}.
	\]  We remark that these cross-ratios are non-degenerate (i.e. not $0,1,\infty$) and they satisfy the gluing equations for $\Tcal$ such that the holonomy representation coincides with $\rho_c$. We refer \cite{yoon2018volume} for details.
	
The following proposition shows how a $\sigma$-deformed Ptolemy assignment is related to the $\mathbf{w}$ and $\mathbf{m}$ in Section \ref{sec:PF}. Recall that $\Tcal$ has $n$ regional edges, each of which corresponds to a region of $L$. We orient these edges so that their initial points are the same (see Figures \ref{fig:octahedron} and \ref{fig:developing}), and denote them by $e_j$ ($1 \leq j \leq n$) according to the index of regions. Note that these edges appear as  horizontal edges of an octahedron as in Figure~\ref{fig:octahedron} (cf. Figure \ref{fig:crossing}).
\begin{prop} \label{prop:key} Let $\sigma : \partial N^1 \rightarrow \Cbb^\times$ be a cocycle that is trivial on the sphere components. Then for any $\sigma$-deformed Ptolemy assignment $c : \Tcal^1 \rightarrow \Cbb^\times$, \[(\mathbf{w},\mathbf{m})=\big(c(e_1),\cdots, c(e_n), \overline{\sigma}(\mu_1),\cdots,\overline{\sigma}(\mu_h)\big)\] is a non-degenerate solution such that $\rho_{\mathbf{w},\mathbf{m}}$ coincides with $\rho_c$, viewed as a $\psl$-representation, up to conjugation.
\end{prop}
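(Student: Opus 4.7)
The strategy is to verify, locally at each crossing, that the cross-ratios of the five ideal tetrahedra of $\Tcal$ determined by the $\sigma$-deformed Ptolemy assignment $c$ via the formula of Proposition 2.14 in \cite{yoon2018volume} coincide exactly with the cross-ratios displayed in Figures \ref{fig:cross_ratio_pos} and \ref{fig:cross_ratio_neg} when one sets $w_j = c(e_j)$ and $m_i = \overline{\sigma}(\mu_i)$. Once this local matching is established, everything the proposition asserts reduces to facts already in place: the cross-ratios coming from a deformed Ptolemy assignment automatically lie in $\Cbb\setminus\{0,1\}$, which is precisely condition \eqref{eqn:nond}; the edge-gluing equations of $\Tcal$ hold (by construction of $c$), and in particular the product around each regional edge is $1$, which by the $\tau$-identity $\tau_{c,j}=\exp(w_j\,\partial\Wbb_c/\partial w_j)$ from Section~\ref{sec:proof1} is equivalent to the critical equation \eqref{eqn:diff}; and finally $\rho_{\mathbf{w},\mathbf{m}}$, being the holonomy of the cross-ratio assignment, agrees up to conjugation with the holonomy representation of the same cross-ratios obtained from the cocycle $\phi$ associated to $c$, which by \cite{yoon2018volume} is $\rho_c$ viewed in $\psl$.

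To carry out the local matching, I would fix one positive crossing and work with the five tetrahedra in its octahedron. Each tetrahedron supplies one cross-ratio of the form $\frac{\sigma(s_{12})\sigma(s_{45})}{\sigma(s_{24})\sigma(s_{51})}\frac{c(l_1)c(l_4)}{c(l_2)c(l_5)}$ for a particular edge. For the four ``outer'' tetrahedra the relevant long edges are regional edges $e_j,e_k,e_l,e_m$ together with the auxiliary vertical edge at the crossing, and the ratio of $c$-values produces exactly the corresponding monomial in $w_j,w_k,w_l,w_m$; for the central tetrahedron the $c$-ratio directly gives $w_jw_l/(w_mw_k)$. It remains to identify the $\sigma$-quotients with $m_\alpha^{\pm 1}$ or $m_\beta^{\pm 1}$, and to see they cancel for the central tetrahedron. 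The key observation is that the four short edges appearing in numerator and denominator, once one freely inserts and deletes short edges lying in the two sphere components of $\partial N$ coming from the punctures $p$ and $q$, concatenate into a based loop on the torus component of $\partial N$ corresponding to the strand labeled $\alpha$ or $\beta$, homotopic to its meridian. The hypothesis that $\sigma$ is trivial on the sphere components is exactly what licenses these insertions, so the $\sigma$-product collapses to $\overline{\sigma}(\mu_\alpha)^{\pm 1}$ or $\overline{\sigma}(\mu_\beta)^{\pm 1}$ as required. The negative crossing is handled identically after orientation changes.

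The main obstacle I anticipate is this bookkeeping step: carefully tracking orientations on the short edges $s_{ij}$, choosing the auxiliary index $k$ in the formula for $\phi(s_{ij})$ consistently across adjacent tetrahedra, and verifying that the homotopy in $\partial N$ produces precisely a meridian rather than a meridian twisted by some longitude contribution. Modulo this geometric-combinatorial verification — which is essentially the content of the argument showing that the meridian eigenvalue identification in the last step of the proof of Theorem~\ref{thm:main1} lifts to the deformed Ptolemy setting — every other ingredient is a direct citation of Section~\ref{sec:proof1} or of Proposition 2.14 of \cite{yoon2018volume}.
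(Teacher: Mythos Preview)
Your proposal is correct and follows the same strategy as the paper: compute the five cross-ratios at each crossing via Proposition~2.14 of \cite{yoon2018volume} and match them against Figures~\ref{fig:cross_ratio_pos} and~\ref{fig:cross_ratio_neg}. The paper's execution, however, is more direct than the concatenation/homotopy argument you outline. Two observations make the bookkeeping you worry about disappear. First, the auxiliary ``vertical'' edges satisfy $h^2=h^4$ and $h_2=h_4$ as edges of $\Tcal$ (stated just before the proof), so $c(h^2)/c(h^4)=1$ cancels outright in the $c$-ratio for each outer tetrahedron. Second, of the four short edges in the $\sigma$-quotient for an outer tetrahedron, three sit at equatorial vertices of the octahedron and hence lie on the sphere components of $\partial N$, where $\sigma$ is trivial by hypothesis; the single remaining short edge (e.g.\ $s^{42}$) lies at the top or bottom vertex and is literally the short edge winding once around the over- or under-arc, so $\sigma(s^{42})=\overline{\sigma}(\mu_\beta)$ on the nose---no homotopy or longitude contamination to check. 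The central tetrahedron has all four relevant short edges on sphere components, so its $\sigma$-quotient is~$1$ immediately. With these two remarks your ``main obstacle'' evaporates, and the rest of your argument (non-degeneracy from $z\in\Cbb\setminus\{0,1\}$, the gluing equations giving \eqref{eqn:diff} via the $\tau$-identity, and holonomy matching) is exactly how the paper concludes.
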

\begin{proof} At each crossing of $L$, we denote edges of $\Tcal$ as in Figure \ref{fig:octahedron}. We orient these edges so that they coherent with the vertex-ordering given as in Figure \ref{fig:octahedron}. Recall that $h^2$ and $h^4$ are identified in $\Tcal$ and so are $h_2$ and $h_4$.
	 We denote by $s^{ij}$ (resp., $s_{ij}$) the short-edge running from $h^i$ to $h^j$ (resp., $h_i$ to $h_j$). For instance, $s^{42}$ and $s_{42}$ are short-edges winding the over-arc and under-arc, respectively.
	\begin{figure}[!h] 
		\centering
		\scalebox{1}{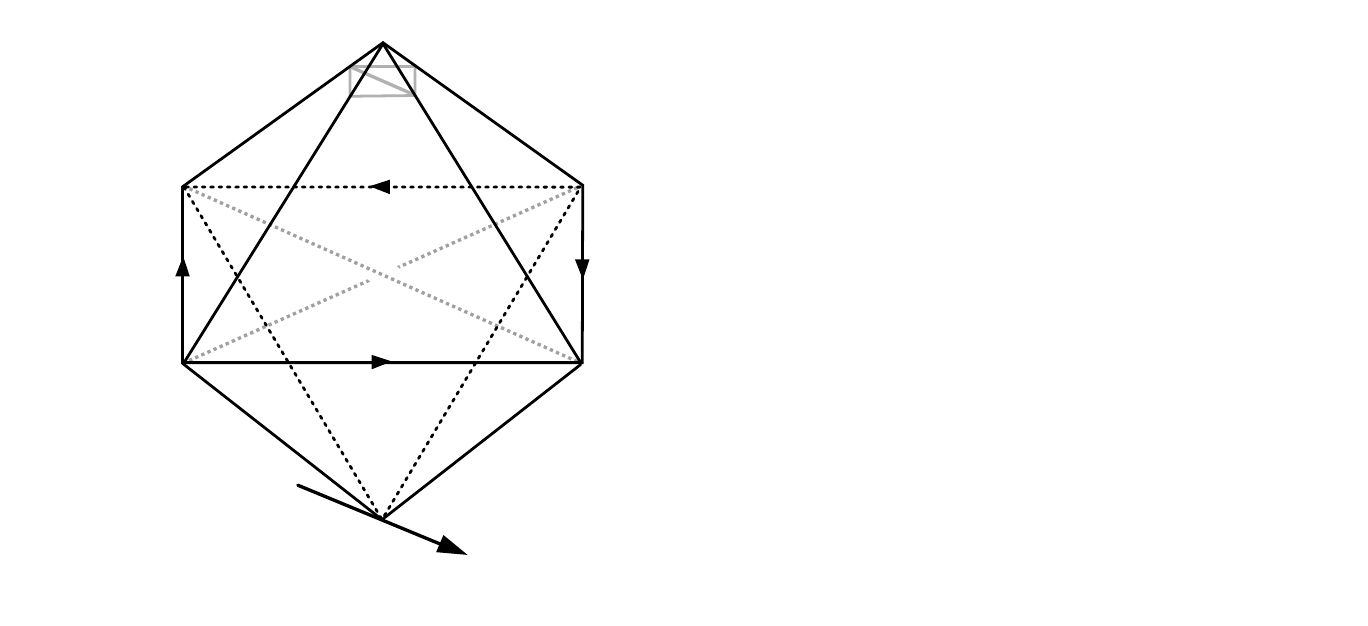} 
		\caption{Octahedron at a crossing.}
		\label{fig:octahedron}
	\end{figure}

	Applying Proposition 2.14 of \cite{yoon2018volume}, the cross-ratio at $h^1$ in Figure~\ref{fig:octahedron}(a) is given by \[\dfrac{c(h^2)c(e_m)}{\sigma(s^{42}) c(h^4)c(e_j)}=\dfrac{c(e_m)}{\sigma(s^{42})c(e_j)}=\dfrac{c(e_m)}{\overline{\sigma}(\mu_\beta)c(e_j)}.\] By the cross-ratio at $h^1$, we mean the cross-ratio at $l_3$ with respect to the tetrahedron chosen as in Figure \ref{fig:cross_ratio_pos}. We use terms the cross-ratios at $h^3,h^5, h_1,h_3$, in a same manner.
Similar computation gives us that the cross-ratios at $h^1,h^3,h^5,h_1,h_3$ for Figure~\ref{fig:octahedron}(a) are respectively given by	
	\[\dfrac{c(e_m)}{\overline{\sigma}(\mu_\beta)c(e_j)}, \frac{\overline{\sigma}(\mu_{\beta}) c(e_k)}{c(e_l)},\ \frac{c(e_j)c(e_l)}{c(e_m)c(e_k)}, \ \frac{c(e_k)}{\overline{\sigma}(\mu_{\alpha}) c(e_j)},\ \frac{\overline{\sigma}(\mu_{\alpha}) c(e_m)}{c(e_l)}\]
	and the cross-ratios at $h^1, h^3,h^5,h_1,h_3$ for Figure~\ref{fig:octahedron}(b) are respectively given by	
	\[ \frac{c(e_j)}{\overline{\sigma}(\mu_{\alpha}) c(e_k)},\ \frac{\overline{\sigma}(\mu_{\alpha}) c(e_l)}{c(e_m)},\ \frac{c(e_m)c(e_k)}{c(e_j)c(e_l)},\frac{c(e_j)}{\overline{\sigma}(\mu_{\beta}) c(e_m)},\ \frac{\overline{\sigma}(\mu_{\beta}) c(e_l)}{c(e_k)}.\]
	The proposition directly follows from comparing the above cross-ratios with the cross-ratios given in Figure \ref{fig:cross_ratio_pos} and \ref{fig:cross_ratio_neg}.
	We remark again that the above cross-ratios are non-degenerate and satisfy the gluing equations for $\Tcal$.
\end{proof}

For a representation $\rho:\pi_1(N)\rightarrow\sl$, we say that a cocycle $\sigma : \partial N^1 \rightarrow \Cbb^\times$ is \emph{associated to $\rho$} if \[\rho|_\Sigma(\gamma) = \begin{pmatrix} \overline{\sigma}(\gamma) & * \\ 0 & \overline{\sigma}(\gamma)^{-1} \end{pmatrix}\] up to conjugation for all $\gamma \in \pi_1(\Sigma)$ and for any component $\Sigma$ of $\partial N$. Here $\rho|_\Sigma: \pi_1(\Sigma) \rightarrow \sl$ means the restriction. Since every component $\Sigma$ of $\partial N$ is either a sphere or a torus, the restriction $\rho|_\Sigma$ is reducible. Therefore, for any representation $\rho$ there exists a cocycle $\sigma$ associated to $\rho$.

\begin{thm}\label{thm:main2} Let $\rho:\pi_1(N)\rightarrow\sl$ be a representation such that $\rho(\mu_i) \neq \pm I$ for all $1 \leq i\leq h$.  Then for any cocycle  $\sigma : \partial N^1 \rightarrow \Cbb^\times$ associated to $\rho$,  there exists a $\sigma$-deformed Ptolemy assignment $c$ such that $\rho_c=\rho$ up to conjugation.	
\end{thm}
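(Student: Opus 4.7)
The plan is to exhibit a 1-cocycle $\phi : N^1 \to \sl$ representing $\rho$ in a gauge satisfying $\phi(s_{ij}) = \begin{pmatrix}\sigma(s_{ij}) & \ast \\ 0 & \sigma(s_{ij})^{-1}\end{pmatrix}$ on every short edge and $\phi(l) = \begin{pmatrix} 0 & -c(l)^{-1} \\ c(l) & 0 \end{pmatrix}$ on every long edge. By Proposition~2.14 of \cite{yoon2018volume}, such a $\phi$ (if it exists) automatically produces a $\sigma$-deformed Ptolemy assignment $c$ with $\rho_{c}=\rho$ up to conjugation, so the whole theorem reduces to the \emph{existence} of such a $\phi$ for our given $\rho$ and $\sigma$.

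I would construct $\phi$ by passing to the universal cover $\widetilde{N}$ and exhibiting a $\rho$-equivariant assignment of a flag $F_{\widetilde{v}}\in\Cbb\mathrm{P}^{1}$ to each vertex $\widetilde{v}\in\widetilde{N}^{0}$. Indeed, once such an equivariant flag is chosen, conjugating the local frame at each vertex so that $F_{\widetilde{v}}=[e_{1}]$ makes $\phi$ automatically upper-triangular on short edges (since the short edges stay in a single cusp, whose holonomy fixes $F$) with diagonal $\overline{\sigma}$; the $(2,1)$-entry of $\phi(l)$ then serves as the Ptolemy variable $c(l)$, and the condition $c(l)\neq 0$ is exactly the statement that the flags at the two endpoints of a lift of $l$ are distinct.

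The flag assignment can be built cusp by cusp. On the two sphere components of $\partial N$ the representation $\rho$ is trivial, so any single flag per sphere produces an equivariant choice, leaving a one-parameter $\Cbb^{\times}$-family of scales still free. On each torus component the hypothesis $\rho(\mu_{i})\neq\pm I$ forces the cusp holonomy to be a non-scalar abelian subgroup of $\sl$, hence possesses either one invariant flag (if $\rho(\mu_{i})$ is parabolic) or two invariant flags (if $\rho(\mu_{i})$ is diagonalizable); among these, the datum of $\sigma$ being associated to $\rho$ singles out exactly the invariant flag whose eigenvalue under the cusp action is $\overline{\sigma}$. This yields a $\rho$-equivariant flag at every vertex of $\widetilde{N}$ compatible with $\sigma$.

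The hard part is to verify the separation $c(l)\neq 0$ on every long edge. At a long edge joining two torus cusps, both flags are rigidly prescribed by $\sigma$, and a coincidence $F_{\widetilde{v}_{1}}=F_{\widetilde{v}_{2}}$ would force the long-edge holonomy to intertwine the two (non-trivial) cusp actions in a very restrictive way; the hypothesis $\rho(\mu_{i})\neq\pm I$ is precisely what rules this out. At a long edge incident to a sphere cusp, the flag at the sphere side is a free parameter, so a generic choice within the residual $\Cbb^{\times}$-freedom dodges any remaining accidental coincidence. Once all $c(l)\neq 0$, the anti-symmetry $c(-e)=-c(e)$ is imposed by fiat, and the deformed Ptolemy relation of Definition~\ref{dfn:ptl} drops out of the $\sl$-cocycle identity for $\phi$ evaluated on the four triangular face loops of each truncated tetrahedron, as computed in Proposition~2.14 of \cite{yoon2018volume}.
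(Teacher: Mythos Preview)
Your overall strategy --- pass to the universal cover, pick a $\rho$-equivariant decoration of the ideal vertices (what you call a flag assignment), define $c$ via determinants, and argue that a generic choice on the sphere cusps avoids all degeneracies --- is exactly the paper's approach. The paper packages this as a \emph{decoration} $\Dcal:\widetilde{N}^{0}\to\Cbb^{2}\setminus\{0\}$ and sets $c(e)=\det(\Dcal(v_{1}),\Dcal(v_{2}))$, then verifies the deformed Ptolemy relation directly via the Pl\"ucker identity.

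However, your handling of the separation condition has a genuine gap. You write that for a long edge joining two torus cusps, a coincidence of flags ``would force the long-edge holonomy to intertwine the two (non-trivial) cusp actions in a very restrictive way; the hypothesis $\rho(\mu_{i})\neq\pm I$ is precisely what rules this out.'' This is not correct in general: take any globally reducible $\rho$ sending every meridian to a non-identity upper-triangular matrix. Then every torus-cusp flag (at every lift) is the common invariant line $[e_{1}]$, and any torus--torus long edge would have $c=0$, despite $\rho(\mu_{i})\neq\pm I$ for all $i$.

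What actually makes the argument go through is a combinatorial feature of the \emph{specific} triangulation $\Tcal$ that you do not invoke: in each octahedron the two polar vertices lie on the link while the four equatorial vertices alternate between the two sphere cusps $p$ and $q$, and the five-tetrahedron subdivision adds only the two diagonals of that equatorial square. Consequently \emph{every} long edge of $\Tcal$ has at least one endpoint at a sphere cusp; there are no torus--torus edges at all. The paper exploits this explicitly, sorting edges into (i) regional $p$--$q$ edges, (ii) edges meeting $\nu(L)$ (one sphere end, one link end), and (iii) the diagonals joining $p$ to $p$ or $q$ to $q$, and then showing that the free choice of $W$ (at $p$) and of one $V_{j}$ (at $q$) lets you dodge the finitely many eigenlines of the Wirtinger generators and the finitely many collinearity conditions $\det(W,V_{j})=0$. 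Note also that the freedom you need here is the full $\Cbb\mathrm{P}^{1}$ choice of flag at each sphere cusp, not the residual $\Cbb^{\times}$ scale; the scale does not affect whether $c(l)$ vanishes.
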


A proof of Theorem \ref{thm:main2} is essentially also 	given in \cite[\S 4]{CYZ2018hikami} (see also \cite{cho2016optimistic}). The proof given in \cite{CYZ2018hikami} assume that $\rho$ is a (lifting of) boundary parabolic representation, but this is not actually required in the proof. For completeness of the paper, we present a detailed proof of Theorem \ref{thm:main2} in Section \ref{sec:proof2}.

\begin{cor}[Theorem \ref{thm:Main2}] \label{cor:main2}
	Let $\rho:\pi_1(N)\rightarrow\psl$ be a representation satisfying $\rho(\mu_i) \neq \pm I$ for all $1 \leq i\leq h$. If the representation $\rho$ admits a $\sl$-lifting, then there exists a non-degenerate solution $(\mathbf{w},\mathbf{m})$ such that $\rho_{\mathbf{w},\mathbf{m}}=\rho$ up to conjugation.	
\end{cor}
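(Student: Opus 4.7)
The plan is to chain together Theorem \ref{thm:main2} and Proposition \ref{prop:key}, with the $\sl$-lifting bridging the gap between representations into $\psl$ and those into $\sl$.

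First I would pick an $\sl$-lifting $\widetilde{\rho} : \pi_1(S^3 \setminus L) \rightarrow \sl$ of $\rho$. Composing with the surjection $\pi_1(N) \twoheadrightarrow \pi_1(S^3 \setminus (L\cup\{p,q\}))/\langle\text{meridians of }p,q\rangle = \pi_1(S^3 \setminus L)$, I regard $\widetilde{\rho}$ as a representation of $\pi_1(N)$. By the hypothesis $\rho(\mu_i) \neq \pm I$, its lift satisfies $\widetilde{\rho}(\mu_i) \neq \pm I$ as well, so the hypothesis of Theorem \ref{thm:main2} applies to $\widetilde{\rho}$.

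Next I would choose the auxiliary cocycle carefully. Because $\widetilde{\rho}$ is trivial on every loop in $\pi_1(N)$ coming from a meridian of $p$ or $q$, it is already trivial when restricted to either sphere component of $\partial N$, and one can then select a cocycle $\sigma : \partial N^1 \to \Cbb^\times$ associated to $\widetilde{\rho}$ whose value on every $1$-cell lying in a sphere component of $\partial N$ is $1$. This is the cocycle that feeds into Proposition \ref{prop:key}. Applying Theorem \ref{thm:main2} to the pair $(\widetilde{\rho}, \sigma)$ then yields a $\sigma$-deformed Ptolemy assignment $c : \Tcal^1 \rightarrow \Cbb^\times$ with $\rho_c = \widetilde{\rho}$ up to conjugation.

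Finally, I would set
\[(\mathbf{w},\mathbf{m}) = \bigl(c(e_1),\dots,c(e_n),\,\overline{\sigma}(\mu_1),\dots,\overline{\sigma}(\mu_h)\bigr).\]
Since $\sigma$ is trivial on the sphere components of $\partial N$, Proposition \ref{prop:key} applies directly and gives that $(\mathbf{w},\mathbf{m})$ is a non-degenerate solution whose associated representation $\rho_{\mathbf{w},\mathbf{m}}$ agrees with $\rho_c$, viewed in $\psl$, up to conjugation. Projecting the identity $\rho_c = \widetilde{\rho}$ from $\sl$ down to $\psl$ then gives $\rho_{\mathbf{w},\mathbf{m}} = \rho$ up to conjugation, which is the statement to be proved.

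The only step that requires real care is the construction of the associated cocycle $\sigma$ that is simultaneously (a) associated to $\widetilde{\rho}$ in the sense needed by Theorem \ref{thm:main2} and (b) trivial on the sphere components as required by Proposition \ref{prop:key}. The existence of some associated cocycle is immediate from reducibility of $\widetilde{\rho}$ on each boundary component, but achieving triviality on the sphere components is precisely where the hypothesis that $\rho$ lifts to $\sl$ enters: without the lift, the meridians of $p$ and $q$ need only be sent to $\pm I$ in $\psl$, and the extra sign would obstruct choosing $\sigma \equiv 1$ on those spheres. Everything else is a direct assembly of the two cited results.
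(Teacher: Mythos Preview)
Your proof is correct and follows the same route as the paper's: lift to $\sl$, choose an associated cocycle $\sigma$ that is trivial on the sphere components of $\partial N$, apply Theorem~\ref{thm:main2} to produce a $\sigma$-deformed Ptolemy assignment, and then read off $(\mathbf{w},\mathbf{m})$ via Proposition~\ref{prop:key}.

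One point in your final paragraph deserves correction. The sphere components of $\partial N$ are the links of the two points $p,q$, and a sphere is simply connected; hence the restriction of \emph{any} representation (into $\sl$ or $\psl$) to such a component is automatically trivial. Equivalently, the meridians of $p$ and $q$ are null-homotopic in $\pi_1(N)\cong\pi_1(S^3\setminus L)$, since removing isolated points from a $3$-manifold does not change $\pi_1$. So there is no sign obstruction on the spheres, with or without a lift, and choosing $\sigma\equiv 1$ there is free. The $\sl$-lifting hypothesis is needed for a more basic reason: Theorem~\ref{thm:main2} and the deformed Ptolemy formalism are stated for $\sl$-representations (the associated cocycle $\sigma$ records honest eigenvalues in $\Cbb^\times$, not $\pm$-classes), so without a lift you cannot invoke Theorem~\ref{thm:main2} at all. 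Your proof uses the lift in exactly this way; only your explanation of its role is off.
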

 \begin{proof} For each sphere component $\Sigma$ of $\partial N$, the restriction $\rho|_\Sigma :\pi_1(\Sigma)\rightarrow \sl$ is clearly trivial. Thus one can choose an associated cocycle $\sigma$ such that it is trivial on the sphere components. Then the proof directly follows from Proposition \ref{prop:key} and Theorem \ref{thm:main2}.
 \end{proof}
 \subsection{Proof of Theorem \ref{thm:main2}} \label{sec:proof2}
 For simplicity we may assume that a given cocycle  $\sigma : \partial N^1 \rightarrow \Cbb^\times$ is trivial on the sphere components. 
 Let $\widetilde{N}$ be the universal cover of $N$. 
 We lift $\sigma$ to $\partial\widetilde{N}$, and denote the resulting  cocycle also by $\sigma : \partial \widetilde{N}^1 \rightarrow \Cbb^\times$.
 	
 	\begin{defn} (\cite{yoon2018volume}) A \emph{decoration} $\Dcal : \widetilde{N}^0 \rightarrow \Cbb^2 \setminus \{ (0,0)^t\}$ is an assignment satisfying \begin{itemize}
 			\item  ($\rho$-equivariance) $\Dcal(\gamma \cdot v)=\rho(\gamma) \Dcal(v)$ for all $\gamma \in \pi_1(N)$ and $v \in \widetilde{N}^0$;
 			\item 	$\Dcal(v_2)= \sigma(s)\Dcal(v_1)$ for all $s \in \partial \widetilde{N}^1$ where $v_1$ and $v_2$ are the initial and terminal vertices of $s$, respectively.
 		\end{itemize} 
	\end{defn}
	We remark that a decoration exists, since a given cocycle $\sigma$ is associated to $\rho$. 
 	For a decoration $\Dcal$ we define $c : \Tcal^1 \rightarrow \Cbb$ by \[c(e) = \textrm{det}(\Dcal(v_1),\Dcal(v_2))\] for $e \in \Tcal^1$ where $v_1$ and $v_2$ are the initial and terminal vertices of any lifting of $e$, viewed as a long edge of $N$, respectively. Note that $c(e)$ does not depend on the choice of a lifting of $e$, since $\Dcal$ is $\rho$-equivariant. Also, note that $c(-e)=-c(e)$ for all $e \in \Tcal^1$.
 	\begin{prop} If $c(e) \neq 0 $ for all $e \in \Tcal^1$, then $c :\Tcal^1 \rightarrow \Cbb^\times$ is a $\sigma$-deforemd Ptolemy assignment.
 	\end{prop}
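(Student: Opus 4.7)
The plan is to verify the two conditions in Definition \ref{dfn:ptl}: the antisymmetry $c(-e) = -c(e)$ and the Ptolemy relation at each ideal tetrahedron of $\Tcal$.

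For antisymmetry, reversing the orientation of a long edge interchanges its initial and terminal vertices, and the $2\times 2$ determinant is antisymmetric in its two columns, so $c(-e) = -c(e)$ is immediate from the definition of $c$.

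For the Ptolemy relation at a tetrahedron $\Delta$ of $\Tcal$, the strategy is to reduce it to the classical Plücker identity in $\Cbb^2$. I would lift $\Delta$ to a truncated tetrahedron $\widetilde{\Delta}$ in $\widetilde{N}$, label its four truncated faces $T_0, T_1, T_2, T_3$ corresponding to the four original ideal vertices, pick a single base vertex $\tilde v_i$ on each $T_i$, and set $A_i := \Dcal(\tilde v_i) \in \Cbb^2$. By the second axiom of a decoration, the value of $\Dcal$ at any other vertex $v$ on $T_i$ equals $\sigma(\gamma) A_i$, where $\gamma$ is any directed path of short edges from $\tilde v_i$ to $v$; the resulting vector is independent of the path because $\sigma$ is a cocycle around each triangular truncated face. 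Consequently, for each long edge $l_j$ joining $T_{i_1}$ to $T_{i_2}$ one can write
\[
c(l_j) \;=\; \lambda_j \, \det(A_{i_1}, A_{i_2}),
\]
where $\lambda_j \in \Cbb^\times$ is a product of short-edge $\sigma$-values recording the paths from $\tilde v_{i_1}, \tilde v_{i_2}$ to the two endpoints of $l_j$. Substituting these expressions into the Plücker identity
\[
\det(A_0, A_1)\det(A_2, A_3) \;-\; \det(A_0, A_2)\det(A_1, A_3) \;+\; \det(A_0, A_3)\det(A_1, A_2) \;=\; 0
\]
in $\Cbb^2$ and rearranging produces an identity of the form $c(l_3)c(l_6) = X\, c(l_2)c(l_5) + Y\, c(l_1)c(l_4)$, where $X$ and $Y$ are explicit ratios of short-edge $\sigma$-values.

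The only nontrivial remaining task is the combinatorial bookkeeping needed to confirm that, with the vertex ordering and edge orientations of Figure \ref{fig:tetrahedron}, the ratios $X$ and $Y$ coincide with $\frac{\sigma(s_{23})\sigma(s_{26})}{\sigma(s_{35})\sigma(s_{65})}$ and $\frac{\sigma(s_{13})\sigma(s_{16})}{\sigma(s_{34})\sigma(s_{64})}$, respectively. This is a direct application of the cocycle relations $\sigma(e)\sigma(-e) = 1$ and $\sigma(e_1)\sigma(e_2)\sigma(e_3) = 1$ around each triangular face $T_i$, which allow one to replace any short-edge path by any other path with the same endpoints and thereby to rewrite the $\lambda_j$'s in the desired normalized form. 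This step is where I expect the main obstacle to lie; it is not conceptually hard but requires a careful choice of base vertices $\tilde v_i$ and consistent sign conventions so that the $\sigma$-factors drop out in exactly the pattern demanded by Definition \ref{dfn:ptl}. Once the matching is in place, the Ptolemy relation is established, proving that $c$ is a $\sigma$-deformed Ptolemy assignment.
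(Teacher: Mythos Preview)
Your proposal is correct and follows essentially the same approach as the paper: apply the Pl\"ucker identity in $\Cbb^2$ to four decoration vectors, one chosen on each truncated face of a lifted tetrahedron, and then use the cocycle relations for $\sigma$ to match the resulting $\sigma$-factors with those in Definition~\ref{dfn:ptl}. The paper makes this concrete by taking the four vectors to be $\Dcal(v_1),\Dcal(v_5),\Dcal(v_4),\Dcal(v^2)$ (endpoints of the long edges, one per face), which reduces the ``bookkeeping'' you anticipate to a two-line computation; your framework is the same, just stated more abstractly.
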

 	\begin{proof} Let us choose a lifting of an ideal triangulation $\Delta$ of $\Tcal$. We denote the edges of its truncation as in Definition \ref{dfn:ptl}; $l_i$ denotes a long-edge and $s_{ij}$ denotes the short edge running from $l_i$ to $l_j$. We also denote the initial and terminal vertices of $l_{i}$ by $v_{i}$ and $v^{i}$, respectively as in Figure \ref{fig:tetra}.	
 		\begin{figure}[!h]
 			\centering
 			\scalebox{1}{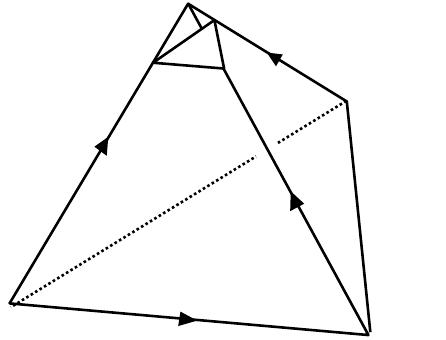}
 			\caption{A truncated tetrahedron.}
 			\label{fig:tetra}
 		\end{figure}
 		
 		Applying the Plucker relation to $\Dcal(v_{1}),\Dcal(v_{5}),\Dcal(v_{4}),\Dcal(v^{2})$, we obtain
 		\begin{talign*}
 			&\textrm{det}(\Dcal(v_{1}),\Dcal(v_{4}))\,\textrm{det}(\Dcal(v_{5}),\Dcal(v^{2})) \\
 			&= \textrm{det}(\Dcal(v_{1}),\Dcal(v_{5}))\, \textrm{det}(\Dcal(v_{4}),\Dcal(v^{2}))+\textrm{det}(\Dcal(v_{1}),\Dcal(v^{2}))\, \textrm{det}(\Dcal(v_{5}),\Dcal(v_{4})).
 		\end{talign*}
 		By construction of $c$, it is equivalent to
 		\begin{talign*}
 			 \sigma(s_{61}) \sigma(s_{64}) c(l_{6}) \,\sigma(s_{32})\sigma(s_{35}) c(l_{3}) 
 			& = \sigma(s_{15}) c(l_{1}) \sigma(s_{42}) c(l_{4})+ \sigma(s_{21}) c(l_{2}) \sigma (s_{54}) c(l_{5})\\
 			\Leftrightarrow\ c(l_3)c(l_6) 			&=\dfrac{\sigma(s_{23})}{\sigma(s_{35})}\dfrac{\sigma(s_{26})}{\sigma(s_{65})}c(l_2)c(l_5)+\dfrac{\sigma(s_{13})}{\sigma(s_{34})}\dfrac{\sigma(s_{16})}{\sigma(s_{64})}c(l_1)c(l_4).
 		\end{talign*}
 	Therefore, $c :\Tcal^1\rightarrow \Cbb^\times$ is a $\sigma$-deformed Ptolemy assignment.
 	\end{proof}
	Therefore, it is enough to claim that there exists a decoration  $\Dcal$ such that the induced assignment $c:\Tcal^1\rightarrow \Cbb$ satisfies $c(e) \neq 0 $ for all $e \in \Tcal^1$. 
 	
 	We first consider the regional edges $e_1,\cdots,e_n$ of $\Tcal$. We choose a lifting, $\widetilde{e}_j$, of each $e_j$ so that their terminal point agree as in Figure \ref{fig:developing}. Let $v^0_k$ and $v^1_k$ be the initial and terminal points of $\widetilde{e}_j$, viewed as an edge of $\widetilde{N}$, respectively. Since $\sigma : \partial N^1 \rightarrow \Cbb^\times$ is trivial on the sphere components, we have $\Dcal(v^1_j)= \Dcal(v^1_k)$. Moreover, from $\rho$-equivariance of $\Dcal$, we have \begin{equation}\label{eqn:equiv}
 	\Dcal(v^0_{j})=\rho(g) \Dcal(v^0_k)
 	\end{equation}
 	for some $g\in \pi_1(N)$.  From elementary covering theory one can check that if $e_j \cup e_k$ wraps an arc of $K$, then the loop $g$ should be the Wirtinger generator corresponding to the arc; see Figure \ref{fig:developing}. For simplicity we let $W=\Dcal(v^1_j)(=\Dcal(v^1_k))$ and $V_j=\Dcal(v^0_j)$ for $1 \leq j \leq m$. Note that $c(e_j) \neq 0 $ if and only if $\textrm{det}(W,V_j) \neq 0$.

 	We then consider the edges of $\Tcal$ that intersect $\nu(L)$. Let us consider an ideal triangle (with edges denoted by $x,y,e_k$) in $S^3 \setminus (L\cup \{p,q\})$ together with its lifting (with edges denoted by $\widetilde{x},\widetilde{y},\widetilde{e}_k$) as in Figure \ref{fig:developing}. Let $v_x$ and $v_y$ be the initial vertices of $\widetilde{x}$ and $\widetilde{y}$, again viewed as edges of $\widetilde{N}$, respectively. Then for the Wirtinger generator $g$, we have
 	\[ \rho(g) \Dcal(v_x)=\Dcal(g \cdot v_x)= \overline{\sigma}(g)^{\pm1} \Dcal(v_x). \]
 	Therefore, $\Dcal(v_x)$ is an eigenvector of $\rho(g)$.
 	It follows that $c(x) =\textrm{det}(W, \Dcal(v_x))\neq 0$  if and only if $W$ is not an eigenvector of $\rho(g)$. Similarly, $c(y) \neq 0 $ if and only if $V_k$ is not an eigenvector of $\rho(g)$.
 	\begin{figure}[!h]
 		\centering
 		\scalebox{1}{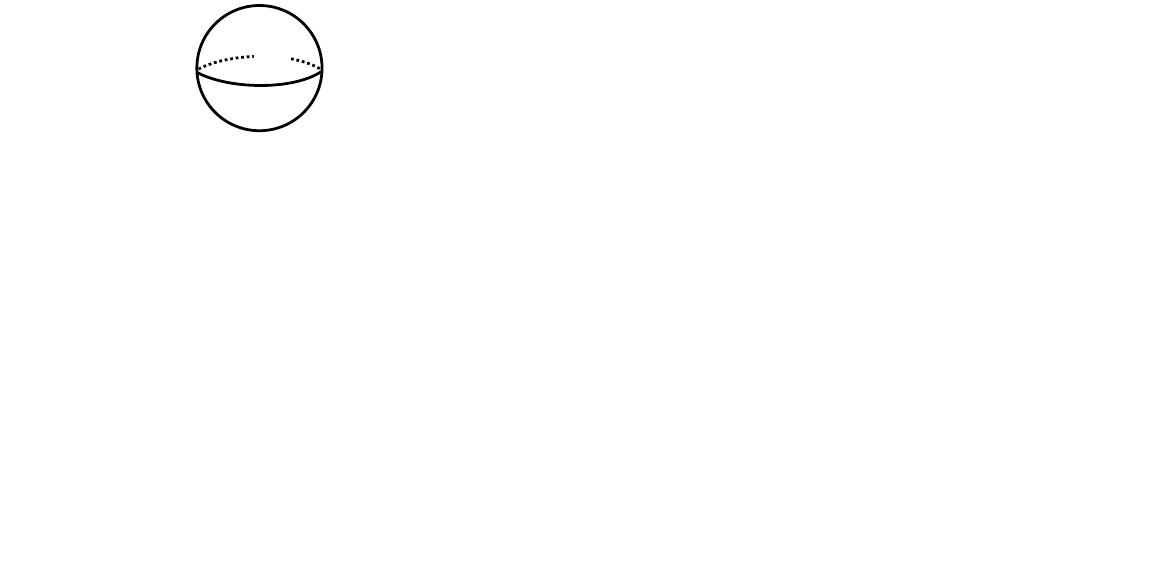} 
 		\caption{Local configuration of a lifting.}
 		\label{fig:developing}
 	\end{figure}

 	We finally consider an edge of $\Tcal$ that joins $q$ to itself.
 	Let us consider an ideal triangle (with edges denoted by $e_j,e_k,z$) in $S^3 \setminus (L \cup \{p,q\})$ together with its lifting (with edges denoted by $\widetilde{e}_j,\widetilde{e}_k,\widetilde{z}$) as in Figure \ref{fig:developing2}. It  follows that $c(z) \neq 0$ if and only if $\textrm{det}(V_j,V_k)=\textrm{det}(\rho(g)V_k,V_k) \neq0$ (recall the equation (\ref{eqn:equiv})). It is equivalent to the condition that $V_k$ is not an eigenvector of $\rho(g)$. Similarly, for an edge $z$ of $\Tcal$ that joins $p$ to itself, we conclude that $c(z)\neq0$ if and only if $W$ is not an eigenvector of $\rho(g)$.
 	\begin{figure}[!h]
 		\centering
 		\scalebox{1}{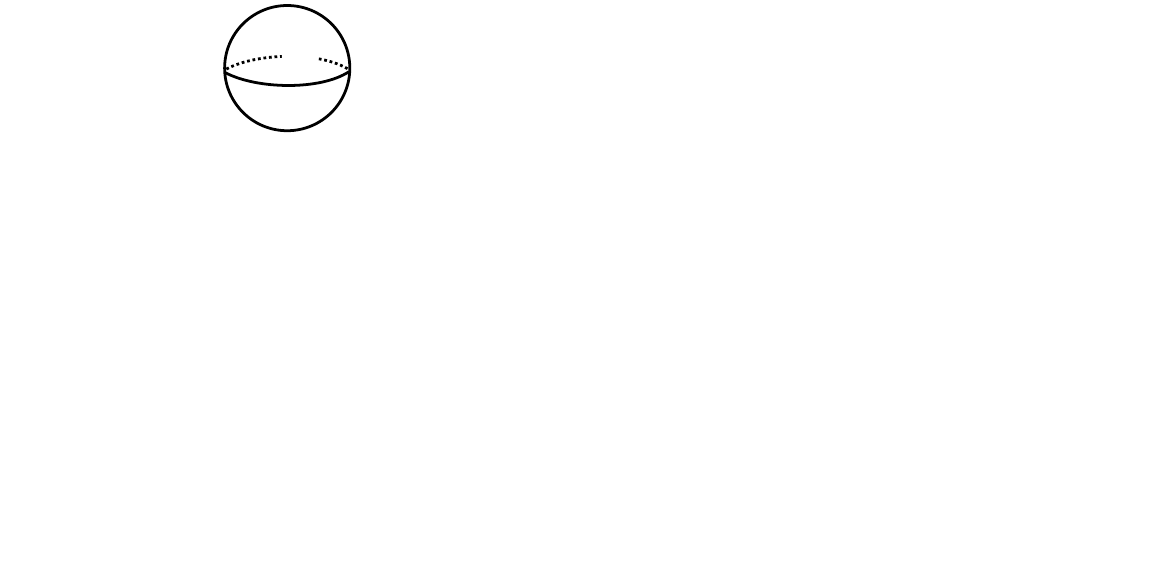} 
 		\caption{Local configuration of a lifting.}
 		\label{fig:developing2}
 	\end{figure}

 	Let us sum up the required conditions. To be precise, we enumerate the Wirtinger generators by $g_1,\cdots,g_{l}$. A desired decoration should satisfy (i) $\textrm{det}(W,V_j) \neq 0$; (ii) $W$ is not an eigenvector of $\rho(g_i)$; (iii) $V_j$ is not an eigenvector of $\rho(g_i)$
 	for all $1 \leq j \leq m$ and $1 \leq i \leq l$.
 	Since we can choose $W$ and one of $V_j$'s freely, such a decoration exists. 
 	See, for instance, Lemma 2.1 in \cite{cho2016optimistic}, Examples \ref{ex:ex1} or \ref{ex:ex2}.

\section{Volume and Chern-Simons invariant} \label{sec:VCSI}
We devote this section to prove Theorem \ref{thm:Main3}. For convenience of the reader, let us recall the theorem.

We fix a meridian $\mu_i$ and let $\lambda_i$ be the canonical longitude of each component of a link $L$. For  $\kappa=(\kappa_1,\cdots,\kappa_h) \in (\Qbb \cup \{\infty\} )^h$ we denote by $M_\kappa$ the manifold  obtained by Dehn filling along the slope $\kappa_i$ on each boundary torus $M=S^3 \setminus \nu(L)$, where $\kappa_i=\infty$ means that we do not fill the corresponding boundary torus.

Let $\rho : \pi_1(M_\kappa) \rightarrow \psl$ be a representation.
If $M_\kappa$ has non-empty boundary, we assume that $\rho$ is boundary parabolic so that the volume and Chern-Simons invariant of $\rho$ are well-defined. Regarding $\rho$ as a representation from $\pi_1(M)$ by compositing the inclusion $\pi_1(M)\rightarrow \pi_1(M_\kappa)$, we have
\begin{equation} \label{eqn:dehn}
\left\{
\begin{array}{ll}
\textrm{tr}(\rho(\mu_i))=\pm 2,\,\, \textrm{tr}(\rho(\lambda_i))=\pm 2  & \textrm{for } \kappa_i=\infty \\[3pt]
\rho (\mu_i^{r_i}\lambda_i^{s_i})= \pm I	& \textrm{for } \kappa_i = \frac{r_i}{s_i} \neq \infty
\end{array}
\right.
\end{equation} where $r_i$ and $s_i$ are coprime integers.
If we assume that $\rho: \pi_1(M)\rightarrow \psl$ admits a $\sl$-lifting and $\rho(\mu_i) \neq \pm I$ for all $1 \leq  i \leq h$, then there exists a point $(\mathbf{w},\mathbf{m})$ such that $\rho_{\mathbf{w},\mathbf{m}} = \rho$ up to conjugation where  $m_i$ is an eigenvalue of $\rho(\mu_i)$. Recall Corollary \ref{cor:main2} and Theorem \ref{thm:main1}. It follows from the equation (\ref{eqn:dehn}) that for $\kappa_i \neq \infty$ we have $m_i^{r_i} l_i^{s_i}=\pm1$ and thus $r_i\textrm{log}\, m_i+s_i\textrm{log}\, l_i \equiv 0$ in modulo $\pi \sqrt{-1}$ where $l_i$ is an eigenvalue $\rho(\lambda_i)$. From coprimeness of $(r_i,s_i)$, there exists integers  $u_i$ and $v_i$ satisfying
\begin{equation} \label{eqn:rs}
r_i \textrm{log}\,m_i + s_i \textrm{log}\,l_i + \pi \sqrt{-1}( r_i u_i + s_i v_i)=0.
\end{equation}

\begin{thm}[Theorem \ref{thm:Main3}] \label{thm:main3}  The volume and Chern-Simons invariant of $\rho : \pi_1(M_\kappa)\rightarrow \psl$ are given by \[\sqrt{-1}(\textrm{Vol}(\rho)+ \sqrt{-1}\mkern1mu\textrm{CS}(\rho)) \equiv \Wbb_0(\mathbf{w},\mathbf{m}) \quad \textrm{mod } \pi^2 \Zbb\] where the function $\Wbb_0(w_1,\cdots,w_{n},m_1,\cdots,m_h)$ is defined by
	\begin{equation*}
	\begin{array}{l}
	\Wbb_0:=\Wbb(w_1,\cdots,w_{n},m_1,\cdots,m_h)-\displaystyle\sum_{j=1}^{n}  \mkern-2mu \left(w_j \dfrac{\partial \Wbb}{\partial w_j}\right)\textrm{log}\,w_j\\[10pt]
	\quad \quad \quad  - \mkern -3mu \displaystyle\sum_{ \kappa_i\neq \infty} \mkern-3mu \left[\left(m_i \dfrac{\partial \Wbb}{\partial m_i}\right)(\textrm{log}\,m_i + u_i \pi \sqrt{-1}) -\dfrac{r_i}{s_i}(\textrm{log}\,m_i + u_i \pi \sqrt{-1})^2 \right].
	\end{array}
	\end{equation*}
\end{thm}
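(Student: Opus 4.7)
The plan is to identify $\Wbb_0(\mathbf{w},\mathbf{m})$, modulo $\pi^2\Zbb$, with the complex Chern--Simons invariant of $\rho$ computed from the ideal triangulation $\Tcal$ via Neumann's extended Bloch group formula, adjusted for Dehn filling by the standard surgery formula for complex volume. By Corollary \ref{cor:main2} the representation $\rho$ is realized by the non-degenerate solution $(\mathbf{w},\mathbf{m})$, and Figures \ref{fig:cross_ratio_pos} and \ref{fig:cross_ratio_neg} supply the cross-ratios on $\Tcal$. I would first treat the unfilled case $\kappa=(\infty,\ldots,\infty)$ and then add the corrections coming from each filled cusp.

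\textbf{Crossing identity and $\textrm{log}\,w_j$-cancellation.} The first step is to verify, for each crossing $c$, that $\Wbb_c$ equals the signed sum of Rogers dilogarithms $R(z)=\textrm{Li}_2(z)+\frac{1}{2}\textrm{log}\,z\,\textrm{log}(1-z)$ evaluated at the five cross-ratios at $c$, plus an explicit quadratic form in $\textrm{log}\,w_j$ and $\textrm{log}\,m_\alpha,\textrm{log}\,m_\beta$. When $m_\alpha=m_\beta=1$ this is exactly the identity used in \cite{cho2016optimistic}, and the general case follows from the same manipulations of the five-term relation and the inversion formula for $\textrm{Li}_2$. Summing over crossings and invoking $\tau_{c,j}=\textrm{exp}(w_j\partial\Wbb_c/\partial w_j)$ from Section \ref{sec:proof1}, the coefficient of $\textrm{log}\,w_j$ in the quadratic remainder is exactly $w_j\partial\Wbb/\partial w_j$. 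Subtracting $\sum_j(w_j\partial\Wbb/\partial w_j)\,\textrm{log}\,w_j$ therefore kills precisely the flattening contributions along the regional edges, and the over/under-edge flattenings cancel automatically from the gluing identities already verified in Section \ref{sec:proof1}.

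\textbf{Meridian derivative and Dehn-filling correction.} The main new ingredient beyond the boundary-parabolic case is the identification of $m_i\partial\Wbb/\partial m_i$ with a constant multiple of $\textrm{log}\,l_i$ modulo $\pi\sqrt{-1}\Zbb$, where $l_i$ is the eigenvalue of $\rho(\lambda_i)$ determined by $(\mathbf{w},\mathbf{m})$. This follows by tracing the canonical longitude through the octahedral decomposition exactly as in Figure \ref{fig:eigen}: each over- and under-passing crossing on the $i$-th strand contributes an $m_i^{\pm 1}$ factor to the cross-ratio product along $\lambda_i$, and differentiating $\Wbb$ in $m_i$ collects these contributions, matching the longitude holonomy of $\rho_c$ computed in \cite{yoon2018volume}. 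When $\kappa_i=\infty$ the boundary-parabolic assumption forces $m_i=l_i=\pm 1$, so no term appears in $\Wbb_0$ for unfilled cusps. When $\kappa_i=r_i/s_i\neq\infty$, the complex Chern--Simons invariant of $M_\kappa$ differs from that of $M$ by a surgery term of the form $(\textrm{log}\,m_i+u_i\pi\sqrt{-1})(\textrm{log}\,l_i+v_i\pi\sqrt{-1})$ modulo $\pi^2\Zbb$; substituting $\textrm{log}\,l_i+v_i\pi\sqrt{-1}=-\frac{r_i}{s_i}(\textrm{log}\,m_i+u_i\pi\sqrt{-1})$ from (\ref{eqn:rs}) and then replacing $\textrm{log}\,l_i$ by the expression in terms of $m_i\partial\Wbb/\partial m_i$ collapses this correction into exactly the bracket in the definition of $\Wbb_0$.

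\textbf{Main obstacle.} The principal difficulty is the coherent bookkeeping of $\pi\sqrt{-1}$ log-branches. The quadratic terms produced in the crossing identity, Neumann's edge flattenings, and the integers $u_i,v_i$ of (\ref{eqn:rs}) each carry $\pi\sqrt{-1}$-ambiguities that must conspire so the equality holds modulo $\pi^2\Zbb$ rather than merely modulo $\pi\sqrt{-1}\Zbb$; the latter would only recover the volume part and miss the Chern--Simons contribution. This is the same obstacle handled by Cho in \cite{cho2016optimistic} and can be resolved by lifting the whole computation to Neumann's extended Bloch group, which is natural in the deformed Ptolemy framework of \cite{yoon2018volume} used in Section \ref{sec:VCI}.
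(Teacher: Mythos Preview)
Your overall strategy---express the complex volume as a signed sum of Rogers dilogarithms over the five tetrahedra at each crossing, then match this with $\Wbb_0$---is the same as the paper's, and your identification of the branch bookkeeping as the main obstacle is correct. However, the organization you propose differs from the paper's in a way that introduces a genuine gap.

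\textbf{Where the paper's route differs.} The paper does not separate an ``unfilled case'' from a ``surgery correction''. Instead it invokes directly Theorem~3.4 of \cite{yoon2018volume}, which states that for the Dehn-filled manifold $M_\kappa$ one has
\[
\sqrt{-1}\bigl(\textrm{Vol}(\rho)+\sqrt{-1}\,\textrm{CS}(\rho)\bigr)\equiv\sum_\Delta\epsilon_\Delta R(\Delta)\pmod{\pi^2\Zbb},
\]
where $R(\Delta)=R(z;p,q)$ is Neumann's \emph{extended} Rogers dilogarithm with flattening integers $p,q$ determined by a logarithmic cocycle $a:\partial N^1\to\Cbb$ lifting $\sigma$ and satisfying $r_i\overline{a}(\mu_i)+s_i\overline{a}(\lambda_i)=0$ for $\kappa_i\neq\infty$. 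All of the Dehn-filling information is already encoded in the choice of $a$; no separate surgery formula is applied. The paper then expands $\sum_\Delta\epsilon_\Delta R(\Delta)$ crossing by crossing and groups the result into five explicit blocks (A--E). The A-part yields $\Wbb-\sum_j(w_j\partial_{w_j}\Wbb)\log w_j-\sum_i(m_i\partial_{m_i}\Wbb)\,\overline{a}(\mu_i)$; the D-part yields $-\sum_i\overline{a}(\mu_i)\overline{a}(\lambda_i)$, which becomes $\sum_i\frac{r_i}{s_i}\overline{a}(\mu_i)^2$ via the filling relation; and B, C, E are shown to cancel globally (Lemma~\ref{lem:bpart} handles B by tracing along each over/under-arc). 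In particular, the two bracketed terms in $\Wbb_0$ arise from \emph{different} parts of the computation (A and D respectively), not from a single surgery term.

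\textbf{The gap in your approach.} Your plan ``treat the unfilled case first, then add surgery corrections'' is problematic because when $\kappa_i\neq\infty$ the representation $\rho$ restricted to $M$ is not boundary parabolic at the $i$-th cusp, so $\textrm{Vol}+\sqrt{-1}\,\textrm{CS}$ of $\rho$ on $M$ is not defined in the sense used here; there is no well-defined quantity from which to ``differ by a surgery term''. One can make sense of this via a relative invariant depending on a boundary framing, but you would then have to check that your surgery correction matches that dependence, which is essentially re-deriving the formula from \cite{yoon2018volume} that the paper uses from the start. Relatedly, your crossing identity uses the ordinary $R(z)=\textrm{Li}_2(z)+\tfrac12\log z\,\log(1-z)$, which only controls things modulo $\pi\sqrt{-1}$ times logarithms; the paper needs the extended $R(z;p,q)$ with the specific $p,q$ coming from $a$ and $c$ to land in $\Cbb/\pi^2\Zbb$, and the vanishing of the B-, C-, E-parts is not crossing-local but requires summing along arcs. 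Your final paragraph correctly flags this, but ``lift to the extended Bloch group via \cite{yoon2018volume}'' is not a patch at the end---it is the starting point of the computation.
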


\subsection{Proof of Theorem \ref{thm:main3}}

We assign a vertex-ordering of each tetrahedron $\Delta$ of $\Tcal$ as in Figure \ref{fig:octahedron}. Note that these orderings agree on the common faces, so we may orient every edge of $\Tcal$ with respect to this vertex-ordering. 
We say that $\Delta$ is \emph{positively oriented} if the orientation of $\Delta$ induced from the vertex-ordering agrees with the orientation of $N$, and $\Delta$ is \emph{negatively oriented}, otherwise. We let $\epsilon_\Delta = \pm1$ according to this orientation of $\Delta$.

Let $\widetilde{\rho} : \pi_1(N)\rightarrow \sl$ be a lifting of $\rho$ and $\sigma : \partial N^1 \rightarrow \Cbb^\times$ be a cocycle assoicated to $\widetilde{\rho}$ which is trivial on the sphere components. From the equation (\ref{eqn:dehn}) we have
\begin{equation*} 
\left\{
\begin{array}{ll}
\overline{\sigma}(\mu_i)=\pm 1,\,\, \overline{\sigma}(\lambda_i)=\pm 1  & \textrm{for } \kappa_i=\infty \\[3pt]
\overline{\sigma} (\mu_i^{r_i}\lambda_i^{s_i})= \pm 1	& \textrm{for } \kappa_i = \frac{r_i}{s_i} \neq \infty
\end{array}
\right. .
\end{equation*}
It is showed in \cite{yoon2018volume} (seee Proposition 3.12 in \cite{yoon2018volume}) that there exists a cocycle $a : \partial N^1 \rightarrow \Cbb$ such that (i) $a(e) \equiv \textrm{log}\, \sigma(e)$ in modulo $\pi \sqrt{-1} \Zbb$ for all $e \in \partial N^1$; (ii) $a$ is trivial on the sphere components; (iii) the induced homomorphism $\overline{a}$ satisfies 
\begin{equation*} 
\left\{
\begin{array}{ll}
\overline{a}(\mu_i)=\overline{a}(\lambda_i)=0  & \textrm{for } \kappa_i=\infty \\[3pt]
\overline{a}(\mu_i) = \textrm{log}\,\overline{\sigma}(\mu_i) + u_i \pi \sqrt{-1} \textrm{ and }  \overline{a}(\lambda_i) = \textrm{log}\, \overline{\sigma}(\lambda_i) + v_i \pi \sqrt{-1} & \textrm{for } \kappa_i \neq \infty
\end{array}
\right. .
\end{equation*} The equation  (\ref{eqn:rs}) tells us that 
$r_i\overline{a}(\mu_i)+s_i\overline{a}(\lambda_i)=0$ for all $\kappa_i\neq \infty$.
On the other hand, by Theorem \ref{thm:main2} there exists a $\sigma$-deformed Ptolemy assignment $c : \Tcal^1 \rightarrow \Cbb^\times$ such that $\rho_c = \widetilde{\rho}$ up to conjugation. We let \[(\mathbf{w},\mathbf{m})=(c(e_1),\cdots, c(e_n), \overline{\sigma}(\mu_1),\cdots,\overline{\sigma}(\mu_h))\] as in Proposition \ref{prop:key}.

For each ideal tetrahedron $\Delta$ (with edges denoted as in Figure \ref{fig:tetrahedron}) of $\Tcal$, we let 
\begin{align*}
	z&=\dfrac{\sigma(s_{12})\sigma(s_{45})}{\sigma(s_{24})\sigma(s_{51})}\,\dfrac{c(l_1) c(l_4)}{c(l_2)c(l_5)} \\
	p\pi \sqrt{-1}&=(a(s_{12}) + a(s_{45}) - a(s_{24})-a(s_{51}) \\ & \quad + \textrm{log}\,c(l_1)+ \textrm{log}\,c(l_4)-\textrm{log}\,c(l_2)-\textrm{log}\,c(l_5) - \textrm{log}\, z \\
	q\pi \sqrt{-1} &= a(s_{53})+a(s_{26})-a(s_{32}) - a(s_{65}) \\ & \quad + \textrm{log}\,c(l_2) +\textrm{log}\,c(l_5)-\textrm{log}\,c(l_3) -\textrm{log}\,c(l_6) + \textrm{log}\,(1-z)
\end{align*}
if $\epsilon_\Delta =1$ and
\begin{align*}
	z&=\dfrac{\sigma(s_{24})\sigma(s_{51})}{\sigma(s_{12})\sigma(s_{45})}\,\dfrac{c(l_2)c(l_5)}{c(l_1) c(l_4)} \\
	p \pi \sqrt{-1}&=  a(s_{24})+a(s_{51}) -a(s_{12}) - a(s_{45}) \\ & \quad + \textrm{log}\,c(l_2)+ \textrm{log}\,c(l_5)-\textrm{log}\,c(l_1)-\textrm{log}\,c(l_4) - \textrm{log}\, z\\
	q \pi \sqrt{-1} &= a(s_{43})+a(s_{16})-a(s_{64}) - a(s_{31}) \\ & \quad + \textrm{log}\,c(l_1) +\textrm{log}\,c(l_4)-\textrm{log}\,c(l_3) -\textrm{log}\,c(l_6) + \textrm{log}\,(1-z)
\end{align*}
if $\epsilon_\Delta =-1$, and let $R(\Delta) := R(z;p,q)$ where $R$ is the extended Rogers dilogarithm \cite{neumann2004extended} defined by
\begin{equation*} 
R(z;p,q)=\textrm{Li}_2(z)+ \frac{\pi \sqrt{-1}}{2}( p\, \textrm{log}\,(1-z)+ q\, \textrm{log}\,z )+\frac{1}{2} \textrm{log}\,(1-z)\,\textrm{log} \, z - \frac{\pi^2}{2}.
\end{equation*} 
It is proved in \cite{yoon2018volume} (see Theorem 3.4 and Remark 3.5 in \cite{yoon2018volume}) that 
\begin{equation} \label{eqn:ref} \sqrt{-1}(\textrm{Vol}(\rho)+ \sqrt{-1}\mkern1mu\textrm{CS}(\rho)) \equiv \sum_\Delta \epsilon_\Delta R(\Delta) \quad \textrm{mod } \pi^2 \Zbb
\end{equation}
 where the sum is over all tetrahedra $\Delta$ of $\Tcal$. We refer \cite{yoon2018volume} for details. Therefore, it is enough to show that the right-hand side of the equation (\ref{eqn:ref}) is equal to $\Wbb_0 (\mathbf{w},\mathbf{m})$ in modulo $\pi^2 \Zbb$.
 
Let us first consdier a crossing of $L$ as in Figure~\ref{fig:crossing}(a). At this crossing, we denote edges of $\Tcal$ as in Figure~\ref{fig:octahedron}(a). We also denote by $\Delta^1$ the tetrahedron corresponding to the edge $h^1$ as in Figure \ref{fig:cross_ratio_pos}, and denote similarly for $h^3,h^5,h_1,$ and $h_3$. It is not hard to check that 
$\epsilon_{\Delta^1}=\epsilon_{\Delta^5}=\epsilon_{\Delta_1}=1$ and $\epsilon_{\Delta^3}=\epsilon_{\Delta_3}=-1$.
A straightforward computation gives 
\begin{talign*}
&R(\Delta^1)  = \textrm{Li}_2(\frac{w_m}{m_\beta w_j}) - \frac{\pi^2}{6}+\frac{1}{2}(\textrm{log}\,w_m - \textrm{log}\,w_j - \overline{a}(\mu_\beta) )\, \textrm{log}(1-\frac{w_m}{m_\beta w_j}) \\
&\quad +\frac{1}{2}(\textrm{log}\,w_j  - \textrm{log}\,c(h^5) + \textrm{log}\,c(h^2) - \textrm{log}\,c(h^1)  + 
 a(s^{41})+\textrm{log}(1-\frac{w_m}{m_\beta w_j})) \textrm{log}\,\frac{w_m}{m_\beta w_j}.
\end{talign*} Since $\textrm{log}\, \frac{w_m}{m_\beta w_j} \equiv \textrm{log}\,w_m - \textrm{log}\,w_j - \overline{a}(\mu_\beta)$ in modulo $2 \pi \sqrt{-1}$,
\begin{talign*}
&R(\Delta^1) \equiv \textrm{Li}_2(\frac{w_m}{m_\beta w_j}) - \frac{\pi^2}{6}+(\textrm{log}\,w_m - \textrm{log}\,w_j - \overline{a}(\mu_\beta))\, \textrm{log}(1-\frac{w_m}{m_\beta w_j}) \\
&\quad+\frac{1}{2}(\textrm{log}\,w_j   - \textrm{log}\,c(h^5)+ \textrm{log}\,c(h^2) - \textrm{log}\,c(h^1)  +  a(s^{41}))(\textrm{log}\,w_m - \textrm{log}\,w_j - \overline{a}(\mu_\beta))
\end{talign*} in modulo $\pi^2 \Zbb$.  We similarly compute the Rogers dilogarithm terms for other tetrahedra and obtain : 
\allowdisplaybreaks
\begin{talign*}
&R(\Delta^1)-R(\Delta^3)+R(\Delta_1)-R(\Delta_3)+R(\Delta^5) \\
&=\textrm{Li}_2(\frac{w_m}{m_\beta w_j})- \textrm{Li}_2(\frac{w_l}{m_\beta w_k})+\textrm{Li}_2(\frac{w_k}{m_\alpha w_j})-\textrm{Li}_2(\frac{w_l}{m_\alpha w_m})+\textrm{Li}_2(\frac{w_j w_l}{w_k w_m})  -\frac{\pi^2}{6}\\
&+ (\textrm{log}\,w_m - \textrm{log}\,w_j - \overline{a}(\mu_\beta) )\, \textrm{log}(1-\frac{w_m}{m_\beta w_j})\\
&+ (\textrm{log}\,w_k - \textrm{log}\,w_l + \overline{a}(\mu_\beta))\, \textrm{log}(1-\frac{w_l}{m_\beta w_k})\\
&+ (\textrm{log}\,w_k - \textrm{log}\,w_j - \overline{a}(\mu_\alpha) )\,\textrm{log}(1-\frac{w_k}{m_\alpha w_j})\\
&+ (\textrm{log}\,w_m - \textrm{log}\,w_l + \overline{a}(\mu_\alpha) )\, \textrm{log}(1-\frac{w_l}{m_\alpha w_m})\\
&+ (\textrm{log}\,w_l + \textrm{log}\,w_j - \textrm{log}\,w_k-\textrm{log}\,w_m)\, \textrm{log}(1-\frac{w_j w_l}{w_k w_m})\\
&+\frac{1}{2} (\textrm{log}\,w_j-\textrm{log}\,c(h^5)+\textrm{log}\,c(h^2)-\textrm{log}\,c(h^1)+ a(s^{41}))(\textrm{log}\,w_m-\textrm{log}\,w_j -\overline{a}(\mu_\beta))\\
&+\frac{1}{2} (\textrm{log}\,w_k-\textrm{log}\,c(h^5)+\textrm{log}\,c(h^2)-\textrm{log}\,c(h^3)+a(s^{43}))(\textrm{log}\,w_k-\textrm{log}\,w_l+\overline{a}(\mu_\beta))\\
&+\frac{1}{2} (  \textrm{log}\,w_j -\textrm{log}\,c(h_5) + \textrm{log}\,c(h_2)-\textrm{log}\,c(h_1) + a(s_{41}))(\textrm{log}\,w_k-\textrm{log}\,w_j-\overline{a}(\mu_\alpha)) \\
&+\frac{1}{2} ( \textrm{log}\,w_m-\textrm{log}\,c(h_5) + \textrm{log}\,c(h_2) -\textrm{log}\,c(h_3) + a(s_{43}))(\textrm{log}\,w_m-\textrm{log}\,w_l+\overline{a}(\mu_\alpha))\\
&+\frac{1}{2} (\textrm{log}\, w_k + \textrm{log}\,w_m - \textrm{log}\, c(h^5) -\textrm{log}\,c(h_5) ) (\textrm{log}\,w_l+\textrm{log}\,w_j-\textrm{log}\,w_k-\textrm{log}\,w_m)
\end{talign*} 
Rearranging the last five lines appropriately, we obtain
\allowdisplaybreaks
\begin{talign*}
	&R(\Delta^1)-R(\Delta^3)+R(\Delta_1)-R(\Delta_3)+R(\Delta_5) \\
	&\left.\begin{array}{l}
		=\textrm{Li}_2(\frac{w_m}{m_\beta w_j})- \textrm{Li}_2(\frac{w_l}{m_\beta w_k})+\textrm{Li}_2(\frac{w_k}{m_\alpha w_j})-\textrm{Li}_2(\frac{w_l}{m_\alpha w_m})+\textrm{Li}_2(\frac{w_j w_l}{w_k w_m})  -\frac{\pi^2}{6}\\[5pt]
		+\, (\textrm{log}\,w_m - \textrm{log}\,w_j - \overline{a}(\mu_\beta) )\, \textrm{log}(1-\frac{w_m}{m_\beta w_j})\\[5pt]
		+\, (\textrm{log}\,w_k - \textrm{log}\,w_l + \overline{a}(\mu_\beta))\, \textrm{log}(1-\frac{w_l}{m_\beta w_k})\\[5pt]
		+\, (\textrm{log}\,w_k - \textrm{log}\,w_j - \overline{a}(\mu_\alpha) )\,\textrm{log}(1-\frac{w_k}{m_\alpha  w_j})\\[5pt]
		+\, (\textrm{log}\,w_m - \textrm{log}\,w_l + \overline{a}(\mu_\alpha) )\, \textrm{log}(1-\frac{w_l}{m_\alpha w_m})\\[5pt]
		+\, (\textrm{log}\,w_l + \textrm{log}\,w_j - \textrm{log}\,w_k-\textrm{log}\,w_m)\, \textrm{log}(1-\frac{w_j w_l}{w_k w_m})\\[5pt]
		-\,(\textrm{log}\,w_m-\textrm{log}\,w_j-\overline{a}(\mu_\beta))(\textrm{log}\,w_k-\textrm{log}\,w_j-\overline{a}(\mu_\alpha))\\
	\end{array} \right\}\textrm{A-part}\\
	&\left.\begin{array}{l}
		+\,\frac{1}{2} \textrm{log}\,c(h^2)\,(\textrm{log}\,w_m+\textrm{log}\,w_k-\textrm{log}\,w_l-\textrm{log}\,w_j) \\[5pt]
		+\,\frac{1}{2} \textrm{log}\,c(h_2)\,(\textrm{log}\,w_m+\textrm{log}\,w_k-\textrm{log}\,w_l-\textrm{log}\,w_j) \\[5pt]
		-\,\frac{1}{2} \textrm{log} \,c(h^1)\, (\textrm{log}\,w_m-\textrm{log}\,w_j -\overline{a}(\mu_\beta))\\[5pt]
		-\,\frac{1}{2} \textrm{log} \,c(h^3)\, (\textrm{log}\,w_k-\textrm{log}\,w_l +\overline{a}(\mu_\beta))\\[5pt]
		-\,\frac{1}{2} \textrm{log} \,c(h_1)\, (\textrm{log}\,w_k-\textrm{log}\,w_j -\overline{a}(\mu_\alpha))\\[5pt]
		-\,\frac{1}{2} \textrm{log} \,c(h_3)\, (\textrm{log}\,w_m-\textrm{log}\,w_l +\overline{a}(\mu_\alpha))\\
	\end{array} \right\} \textrm{B-part}\\
	&\left.\begin{array}{l}
		+\, \frac{1}{2} a(s^{41})(\textrm{log}\,w_m-\textrm{log}\,w_j)+\, \frac{1}{2} a(s^{43})(\textrm{log}\,w_k-\textrm{log}\,w_l) \\[5pt]
		+\, \frac{1}{2} a(s_{21})(\textrm{log}\,w_k-\textrm{log}\,w_j)+\, \frac{1}{2} a(s_{23})(\textrm{log}\,w_m-\textrm{log}\,w_l) \\
	\end{array} \right\} \textrm{C-part}\\
	& \left. \begin{array}{l}
		+\,\overline{a}(\mu_\alpha) \overline{a}(\mu_\beta)-\, \frac{1}{2} a(s^{31})\overline{a}(\mu_\beta)-\, \frac{1}{2} a(s_{31})\overline{a}(\mu_\alpha)\\
	\end{array} \right\} \textrm{D-part} \\
	& \left. \begin{array}{l}
		+\frac{1}{2} \overline{a}(\mu_\alpha) (\textrm{log}\,w_k-\textrm{log}\,w_l)+\frac{1}{2} \overline{a}(\mu_\beta) (\textrm{log}\,w_j-\textrm{log}\,w_k)\\
	\end{array} \right\} \textrm{E-part} 
\end{talign*} Note that in the above computation, $\textrm{log}\,c(h^5)$- and $\textrm{log}\,c(h_5)$-terms vanish, and we replace $a(s_{41})$ and $a(s_{43})$ by $\overline{a}(\mu_\alpha)+a(s_{21})$ and $\overline{a}(\mu_\alpha)+a(s_{23})$, respectively.
We compute similarly for a crossing as in Figure \ref{fig:crossing}(b) and obtain:
\begin{talign*}
	&-R(\Delta^1)+R(\Delta^3)-R(\Delta_1)+R(\Delta_3)-R(\Delta^5) \\
	&\left.\begin{array}{l}
		=-\textrm{Li}_2(\frac{m_\beta w_m}{w_j})+ \textrm{Li}_2(\frac{m_\beta w_l}{w_k})-\textrm{Li}_2(\frac{m_\alpha w_k}{ w_j})+\textrm{Li}_2(\frac{m_\alpha w_l}{w_m})-\textrm{Li}_2(\frac{w_j w_l}{w_k w_m})  +\frac{\pi^2}{6}\\[5pt]
		-\, (\textrm{log}\,w_m - \textrm{log}\,w_j + \overline{a}(\mu_\beta) )\, \textrm{log}(1-\frac{m_\beta w_m}{ w_j})\\[5pt]
		-\, (\textrm{log}\,w_k - \textrm{log}\,w_l - \overline{a} (\mu_\beta))\, \textrm{log}(1-\frac{m_\beta w_l}{w_k})\\[5pt]
		-\, (\textrm{log}\,w_k - \textrm{log}\,w_j + \overline{a}(\mu_\alpha) )\,\textrm{log}(1-\frac{m_\alpha w_k}{ w_j})\\[5pt]
		-\, (\textrm{log}\,w_m - \textrm{log}\,w_l - \overline{a}(\mu_\alpha) )\, \textrm{log}(1-\frac{m_\alpha w_l}{ w_m})\\[5pt]
		-\, (\textrm{log}\,w_l + \textrm{log}\,w_j - \textrm{log}\,w_k-\textrm{log}\,w_m)\, \textrm{log}(1-\frac{w_j w_l}{w_k w_m})\\[5pt]
		+\,(\textrm{log}\,w_m-\textrm{log}\,w_j+\overline{a}(\mu_\beta))(\textrm{log}\,w_k-\textrm{log}\,w_j+\overline{a}(\mu_\alpha))\\
	\end{array} \right\}\textrm{A-part}\\
	&\left.\begin{array}{l}
		-\,\frac{1}{2}\textrm{log}\,c(h^2)\,(\textrm{log}\,w_m+\textrm{log}\,w_k-\textrm{log}\,w_l-\textrm{log}\,w_j) \\[5pt]
		-\,\frac{1}{2}\textrm{log}\,c(h_2)\,(\textrm{log}\,w_m+\textrm{log}\,w_k-\textrm{log}\,w_l-\textrm{log}\,w_j) \\[5pt]
		+\,\frac{1}{2} \textrm{log} \,c(h_1)\, (\textrm{log}\,w_m-\textrm{log}\,w_j +\overline{a}(\mu_\beta))\\[5pt]
		+\,\frac{1}{2} \textrm{log} \,c(h_3)\, (\textrm{log}\,w_k-\textrm{log}\,w_l -\overline{a}(\mu_\beta))\\[5pt]
		+\,\frac{1}{2} \textrm{log} \,c(h^1)\, (\textrm{log}\,w_k-\textrm{log}\,w_j +\overline{a}(\mu_\alpha))\\[5pt]
		+\,\frac{1}{2} \textrm{log} \,c(h^3)\, (\textrm{log}\,w_m-\textrm{log}\,w_l -\overline{a}(\mu_\alpha))\\
	\end{array} \right\} \textrm{B-part}\\
	&\left.\begin{array}{l}
		-\, \frac{1}{2} a(s^{41})(\textrm{log}\,w_k-\textrm{log}\,w_j)-\, \frac{1}{2} a(s^{43})(\textrm{log}\,w_m-\textrm{log}\,w_l) \\[5pt]
		-\, \frac{1}{2} a(s_{21})(\textrm{log}\,w_m-\textrm{log}\,w_j)-\, \frac{1}{2} a(s_{23})(\textrm{log}\,w_l-\textrm{log}\,w_k) \\
	\end{array} \right\} \textrm{C-part}\\
	& \left. \begin{array}{l}
		-\,\overline{a}(\mu_\alpha)\overline{a}(\mu_\beta)-\, \frac{1}{2} a(s^{31})\overline{a}(\mu_\alpha)-\, \frac{1}{2} a(s_{31})\overline{a}(\mu_\beta)\\
	\end{array} \right\} \textrm{D-part} \\
	& \left. \begin{array}{l}
		+\frac{1}{2} \overline{a}(\mu_\beta) (\textrm{log}\,w_j-\textrm{log}\,w_k)+\frac{1}{2} \overline{a}(\mu_\alpha) (\textrm{log}\,w_k-\textrm{log}\,w_l)\\
	\end{array} \right\} \textrm{E-part} 
\end{talign*}
As one can see, we divide the Rogers dilogarithm terms coming from a crossing into 5 parts: A, B, C, D, and E-parts.

Let us first consider A-parts. If we use the equality
\begin{talign*}
	-&(\textrm{log}\,w_k-\textrm{log}\,w_j-\overline{a}(\mu_\alpha))(\textrm{log}\,w_m-\textrm{log}\,w_j-\overline{a}(\mu_\beta)) \\
	=&-(\textrm{log}\,w_k-\textrm{log}\,w_j-\overline{a}(\mu_\alpha)-\textrm{log}\frac{w_k}{m_\alpha w_j})(\textrm{log}\,w_m-\textrm{log}\,w_j-\overline{a}(\mu_\beta)) \\
	& \quad\quad\quad -\textrm{log}\frac{w_k}{m_\alpha w_j}(\textrm{log}\,w_m-\textrm{log}\,w_j-\overline{a} (\mu_\beta)) \\
	\equiv& -\,(\textrm{log}\,w_k-\textrm{log}\,w_j-\overline{a}(\mu_\alpha)-\textrm{log}\,\frac{w_k}{m_\alpha w_j})\textrm{log}\, \frac{w_m}{m_\beta w_j} \\
	&  \quad \quad \quad -\textrm{log}\,\frac{w_k}{m_\alpha w_j}(\textrm{log}\,w_m-\textrm{log}\,w_j-\overline{a}(\mu_\beta))  \quad\quad \quad \quad (\textrm{mod } \pi^2 \Zbb),
\end{talign*}  then one can directly check that the sum of A-parts over all crossings is equal to 
$$\Wbb(\mathbf{w},\mathbf{m})-\displaystyle\sum_{j=1}^{n}  \mkern-5mu \left(w_j \dfrac{\partial \Wbb}{\partial w_j}\right)\textrm{log}\,w_j- \displaystyle\sum_{i=1}^{h} \mkern-5mu \left(m_i \dfrac{\partial \Wbb}{\partial m_i}\right) \overline{a}(\mu_i).$$ 

For D-parts, the sum of $-\frac{1}{2} a(s^{31})\overline{a}(\mu_i)$-terms along the $i$-th component of $L$ results in $-\frac{1}{2} \overline{a}(\lambda_{i;bf})\overline{a}(\mu_i)$, where $\lambda_{i;bf}$ is the blackboard framed longitude of the $i$-th component. Similarly, the sum of $-\frac{1}{2} a(s_{31})b_i(\mu_i)$-terms also results in $-\frac{1}{2} \overline{a}(\lambda_{i;bf})\overline{a}(\mu_i)$. The remaining terms $\pm \overline{a}(\mu_i)\overline{a}(\mu_j)$ revise the framing appropriately and so the sum of D-parts over all crossings is equal to $$-\displaystyle\sum_{i=1}^{h} \mkern-3mu \overline{a}(\mu_i)\overline{a}(\lambda_i).$$ 

\begin{lemma} \label{lem:bpart} The sum of B-parts over all crossings vanishes.
\end{lemma}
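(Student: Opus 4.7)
The plan is to reorganize the sum of B-parts as $\sum_h (\text{coefficient}) \cdot \log c(h)$ over the edges $h$ of $\Tcal$ that appear, and then to show that the coefficient of each $\log c(h)$ vanishes individually. Note first that the edges $h^5$ and $h_5$ (the diagonals created by subdividing each octahedron into five tetrahedra) do not appear in the B-parts: their logarithms already canceled during the computation that produced the A--E decomposition. So only the six ``boundary'' edges $h^1, h^3, h^2(=h^4), h_1, h_3, h_2(=h_4)$ at each crossing need to be tracked.

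The next step is to classify these edges using the arc structure of the octahedral decomposition recalled in Section \ref{sec:PF}. The triangulation $\Tcal$ has one over-edge per over-arc of the diagram and one under-edge per under-arc. For a fixed over-arc $\alpha$ passing over $m$ crossings as in Figure \ref{fig:local_diagram}(a), the corresponding over-edge $h_\alpha$ of $\Tcal$ is realized as $h^2 = h^4$ of the local octahedron at each of these $m$ interior over-crossings, and as $h^1$ or $h^3$ at each of the two terminal under-crossings where $\alpha$ terminates by going under. A symmetric statement classifies the under-edges and the roles of $h_1, h_2, h_3$.

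After this regrouping, the coefficient of $\log c(h_\alpha)$ for a fixed over-arc $\alpha$ is a sum of contributions $\pm\tfrac{1}{2}(\log w_m+\log w_k-\log w_l-\log w_j)$ at each over-crossing of $\alpha$ (with signs determined by the crossings' signs), together with endpoint contributions from the two terminal under-crossings of $\alpha$ involving both $\log w$-differences and $\bar{a}(\mu_i)$-terms. Because the regions on either side of $\alpha$ are shared between consecutive over-crossings along $\alpha$, the $\log w$-terms telescope; and since $\mu_i$ is constant along the link component containing $\alpha$, the $\bar{a}(\mu_i)$-terms telescope as well. The telescoping is arranged exactly so that it cancels the endpoint contributions, yielding zero. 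A parallel argument handles the under-edges $h_\beta$ together with the $h_1, h_3$-contributions.

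The main obstacle will be the sign bookkeeping: verifying that the $\bar{a}(\mu_i)$-terms in the $h^1, h^3, h_1, h_3$ coefficients cancel compatibly with both the sign changes between positive and negative crossings and the $\alpha$-versus-$\beta$ labelling at each crossing (which determines whether a given meridian enters with a $+$ or $-$ sign). Once this combinatorial compatibility is established, the local telescoping goes through, and the lemma follows.
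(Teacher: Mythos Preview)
Your approach is essentially the same as the paper's: regroup the B-parts by the edges of $\Tcal$ and show that the coefficient of each $\log c(e)$ telescopes to zero along the corresponding arc.

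There is, however, a labelling error in your endpoint classification that matters for the cancellation. An over-edge corresponding to an over-arc does appear as $h^2=h^4$ at each intermediate over-crossing, but at the two terminal crossings the over-arc is the \emph{under}-strand, so the over-edge appears there as $h_1$ and $h_3$ (subscript), not $h^1$ and $h^3$. Symmetrically, an under-edge picks up $h^1$ and $h^3$ at its endpoints and $h_2=h_4$ in between. This is not just cosmetic: in the B-parts the coefficients of $h_1,h_3$ carry $\overline{a}(\mu_\alpha)$ (the under-strand's meridian), which at both terminal crossings equals $\overline{a}(\mu_i)$ for the component $i$ containing the over-arc, and the two endpoint contributions $\mp\overline{a}(\mu_i)$ cancel. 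With your labelling the endpoint terms would carry $\overline{a}(\mu_\beta)$, the meridians of the strands over-passing at the two terminals; these belong in general to different components and would not cancel.

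Once this swap is corrected, your telescoping argument is exactly the paper's.
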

\begin{proof} Let $e$ be an over edge of $\Tcal$ with the corresponding over-arc of $L$ as in Figure \ref{fig:local_diagram}(a). Note that the edge $e$ appears as $h_1$ at the initial crossing, as $h_3$ at the terminal crossing, and as $h^2=h^4$ at the intermediate crossings. 
Then, in the sum of B-parts, $\textrm{log}\, c(e)$-terms appear exactly at these crossings and their sum is given by 
\allowdisplaybreaks
\begin{talign*}
		&\frac{1}{2} \textrm{log}\, c(e) \Big[(-\textrm{log}\, w_{j_1}+\textrm{log}\, w_{j_2} -\overline{a}(\mu_i))\\
		&\quad+(\textrm{log}\, w_{j_1}-\textrm{log}\, w_{j_2}-\textrm{log}\, w_{j_3}+\textrm{log}\, w_{j_4})+\cdots  \\[2pt]
		 &\quad 	+(\textrm{log}\, w_{j_{2m-1}}-\textrm{log}\, w_{j_{2m}}-\textrm{log}\, w_{j_{2m+1}}+\textrm{log}\, w_{j_{2m+2}})\\ 
		& \quad +(\textrm{log}\, w_{j_{2m+1}}-\textrm{log}\, w_{j_{2m+2}}+\overline{a}(\mu_i))\Big]	=0.
\end{talign*} Note that changing orientations that are not specified in the local diagram dose not change the computation. We compute similarly for an under edge of $\Tcal$, and complete the proof.
\end{proof}
We omit a proof the fact that the sum of $D$-parts and $E$-parts are respectively zero, since it can be checked combinatorially as in Lemma \ref{lem:bpart}. 

Recall that we have $\overline{a}(\mu_i) = 0$ for $\kappa_i=\infty$  and  $r_i\overline{a}(\mu_i)+s_i\overline{a}(\lambda_i)=0$ for $\kappa_i \neq \infty$. It thus follows that the sum of A- and D-parts over all crossings is equal to $\mathbb{W}_0(\mathbf{w},\mathbf{m})$. This completes the proof, since the sums of B-, C-, and E-parts are all zero.

\begin{exam} \label{ex:ex1} We consider a diagram of the figure-eight knot and denote the Wirtinger generators by $g_1,\cdots,g_4$ as in Figure \ref{fig:figure_eight_diagram}. 
	\begin{figure}[!h]
		\centering
		\scalebox{1}{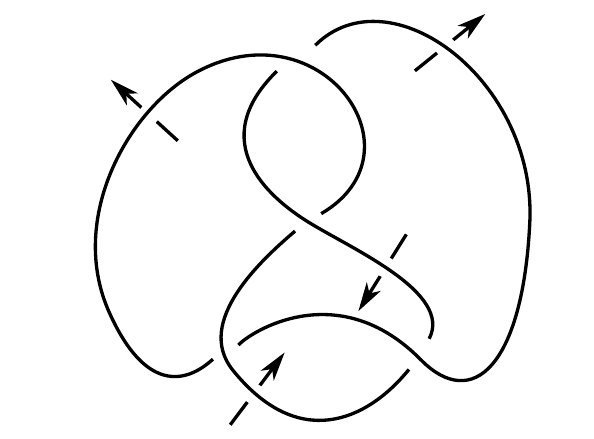} 
		\caption{The figure eight knot diagram.}
		\label{fig:figure_eight_diagram}
	\end{figure} 
It is known that  $$\rho(g_1)=\begin{pmatrix} m&1\\0 &m^{-1}\end{pmatrix} \textrm{ and }\rho(g_4)=\begin{pmatrix} m&0\\y &m^{-1}\end{pmatrix}$$ determine a $\sl$-representation $\rho$ of the knot group if $$y=\frac{-m^4+3 m^2-1 +\sqrt{m^8-2 m^6-m^4-2 m^2+1}}{2 m^2}.$$  The canonical longitude $\lambda$ of the knot is given by $g_2 \, g_4^{-1} \, g_2^{-1} \, g_1$, so an eignvalue $l$ of $\rho(\lambda)$ is given by
$$l=\frac{m^8-m^6+2m^4-m^2+1+(m^4-1)\sqrt{m^8-2 m^6-m^4-2 m^2+1}}{2 m^4}.$$
If we consider the $\frac{2}{3}$-Dehn filling, then we require $m \in \Cbb^\times$ satisfying $m^2l^3=1$; using the Mathematica, we have $$(m,l)=(-1.30664 +0.04987\sqrt{-1},\ -0.43642 + 0.71337\sqrt{-1}).$$ 
We remark that the representation $\rho$ is in fact (a lifting of) the geometric representation for the  $\frac{2}{3}$-filled manifold $M_{\frac{2}{3}}$ obtained from the figure-eight knot exterior.
We let $(u,v)=(-2,0)$ so that $$2 \, \textrm{log}\,m +3\,\textrm{log}\,l +\pi \sqrt{-1}(2 u + 3v )=0.$$

We now consider the vectors $V_j$'s, each of which corresponds to a region, as in Section \ref{sec:proof2}. Recall that these vectors satisfy the condition $$V_{j} =\rho(g_k)^{-1} V_{i}$$ at each arc as in Figure \ref{fig:rule}. (cf. region coloring in \cite{carter2001geometric,cho2016optimistic}.) Note that they are well-determined whenever an initial vector is chosen arbitrarily. For instance, if we choose $V_6= \binom{1}{\sqrt{-1}}$, then we have
\begin{equation*}
\begin{array}{ll}
V_1=\dbinom{-0.84795 - 1.60327 \sqrt{-1}}{-0.44863 - 0.05668 \sqrt{-1}},& V_2=\dbinom{1.04988 + 
	1.30664 \sqrt{-1}}{0.58903 + 0.05168 \sqrt{-1}},\\[10pt]
V_3=\dbinom{-0.78470 + 
	0.37242 \sqrt{-1}}{-0.39208 - 1.12719 \sqrt{-1}},& V_4=\dbinom{0.61054 - 
	0.26172 \sqrt{-1}}{1.12129 + 1.96967 \sqrt{-1}},\\[10pt]
V_5=\dbinom{-0.76421 - 
	1.02917 \sqrt{-1}}{-0.04987 - 1.30664 \sqrt{-1}},& V_6=\dbinom{1}{\sqrt{-1}}.
\end{array}
\end{equation*}
We also choose another vector $W$ almost arbitrarily; for instance, we let $W=\binom{2}{1}$. Then we have $\mathbf{w}=(w_1,\cdots,w_6)$ by $w_j=\textrm{det}(W,V_j)$:
\begin{equation*}
\begin{array}{ll}
w_1=-0.04931 + 1.48991 \sqrt{-1},& w_2=0.12818 - 1.20327 \sqrt{-1},\\[3pt]
w_3 =0.00054 - 2.62681 \sqrt{-1},& w_4=1.63204 + 4.20107 \sqrt{-1},\\[3pt]
w_5= 0.66446 - 1.58411 \sqrt{-1},& w_6= -1 + 2 \sqrt{-1}.
\end{array}
\end{equation*}

\begin{figure}[!h]
	\centering
	\scalebox{1}{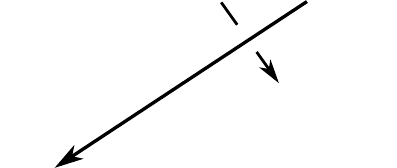}
	\caption{Rule for a region coloring.}
	\label{fig:rule}
\end{figure}

  Plugging the above non-degenerate solution $(\mathbf{w},\mathbf{m})=(w_1,\cdots,w_6,m)$ to Theorem \ref{thm:main2}, we obtain
	$$\sqrt{-1} \big( \textrm{Vol}(M_{\frac{2}{3}}) + \sqrt{-1}\, \textrm{CS}(M_\frac{2}{3})\big)=-3.33836 + 1.73712 \sqrt{-1}.$$ 
	Note that changing choices for $V_6$ and $V_0$ may give a different non-degenerate solution but it results in the same volume and Chern-Simons invariant.

	\begin{exam} \label{ex:ex2}Let us consider a diagram of the Whitehead link as in Figure \ref{fig:white}. One can check that  $$\rho(g_1)=\begin{pmatrix} m_1&1\\0 &m_1^{-1}\end{pmatrix} \textrm{ and }\rho(g_2)=\begin{pmatrix} m_2&0\\y &m_2^{-1}\end{pmatrix}$$ determine a $\sl$-representation of the link group if 
		\begin{talign*}
			&m_1 m_2 (m_1^2-1)(m_2^2-1)+((m_1^2 m_2^2+1)(m_1^2-1)(m_2^2-1)+2m_1^2 m_2^2)y\\
			&+(2-m_1^2-m_2^2+2m_1^2m^2_2)m_1 m_2 y^2+m_1^2 m_2^2y^3=0.
		\end{talign*}
		\begin{figure}[!h]
		\centering
		\scalebox{1}{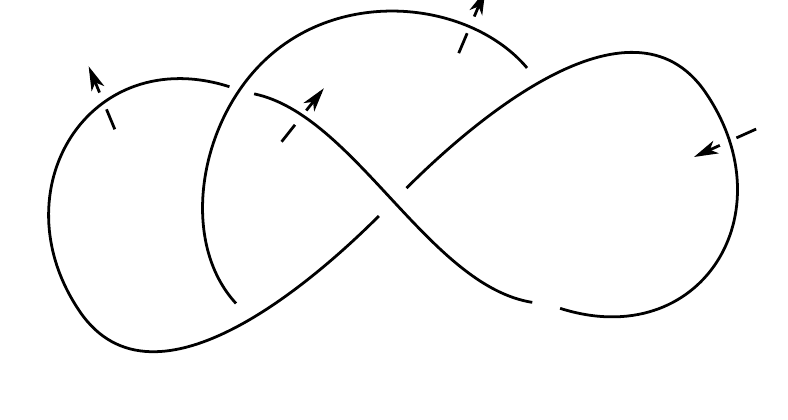} 
		\caption{The Whitehead link.}
		\label{fig:white}
	\end{figure}
 	The longitude of the circular component is given by $g_5 g_2^{-1}$ and that of the other component is given by $g_2 g_1^{-1} g_3 g_4^{-1}$. We obtain
			\allowdisplaybreaks
			\begin{talign*}
				l_1 &= \frac{1}{m_1^2 m_2^3} \Big[m_1^4 m_2 (m_2^2-1)^2 y^2+m_1^3 (m_2^2-1)  (2 m_2^2 (y^2+1)-1)y\\
				&\quad +m_1^2(-m_2^5y^2+m_2^3 (y^4+5 y^2+1)-3 m_2 y^2)\\
				&\quad -m_1 y (m_2^4 (m_2^2+1)-2 m_2^2 (y^2+1)+1)-m_2 (m_2^2-1) y^2\Big],\\
				l_2 &=\frac{1}{m_1^3 m_2}\Big[m_1^4 m_2^2 y+m_1^3 \left(-m_2^3 y^2+m_2 y^2+m_2\right)\\
				&+m_1^2 \left(-m_2^2 y^3-2 m_2^2 y+y\right)+m_1 m_2 \left(m_2^2-2\right) y^2+\left(m_2^2-1\right) y\Big].
			\end{talign*}
		Let us consider $\kappa=(-5,-\frac{5}{2})$ filling; using Mathematica, one can check that  
		\begin{talign*}
			(m_1,l_1)&=(0.60430 + 1.35917 \sqrt{-1},\ 6.31525 - 3.62462 \sqrt{-1})\\
			(m_2,l_2)&=(1.43249 + 1.08047 \sqrt{-1},\ -4.30814 -0.19296 \sqrt{-1})
		\end{talign*} satisfies (numerically) $m_i^{r_i}l_i^{s_i}=1$ for $i=1,2$. We let $(u_1,v_1)=(0,2)$ and  $(u_2,v_2)=(-1,-1)$ so that  the equation (\ref{eqn:rs}) holds for $i=1,2$.
	
		Choosing an initial vector $V_1=\binom{1}{\sqrt{-1}}$ and $W=\binom{2}{1}$, we  obtain :
		\begin{equation*}
			\begin{array}{ll}
				w_1=-1 + 2 \sqrt{-1}, & w_2=	1.93847 - 5.78499 \sqrt{-1}, \\[2pt]
				w_3= -3.05190 - 3.60342 \sqrt{-1}, & w_4=	0.62430 - 1.81291 \sqrt{-1},\\[2pt]
				w_5= -0.59085 - 0.74757 \sqrt{-1}, & w_6=-1.23298 + 2.38517 \sqrt{-1},\\[2pt]
				w_7= -4.06837 - 1.29382 \sqrt{-1}&
			\end{array}
		\end{equation*} Plugging the above non-degenerate solution $(\mathbf{w},\mathbf{m})=(w_1,\cdots,w_7,m_1,m_2)$ to Theorem \ref{thm:main2}, we obtain
		$$\sqrt{-1} \big( \textrm{Vol}(M_\kappa) + \sqrt{-1}\, \textrm{CS}(M_\kappa)\big)=
	     1.18520 + 0.94270\sqrt{-1}.$$
	\end{exam}

%
\end{exam}

\bibliographystyle{abbrv}
\bibliography{biblog}

\begin{thebibliography}{10}

\bibitem{carter2001geometric}
J.~S. Carter, S.~Kamada, and M.~Saito.
\newblock {Geometric interpretations of quandle homology}.
\newblock {\em Journal of knot theory and its ramifications}, 10(03):345--386,
  2001.

\bibitem{cho2016optimistic}
J.~Cho.
\newblock {Optimistic limit of the colored Jones polynomial and the existence
  of a solution}.
\newblock {\em Proceedings of the American Mathematical Society},
  144(4):1803--1814, 2016.

\bibitem{cho_2016}
J.~Cho.
\newblock {Optimistic limits of the colored Jones polynomials and the complex
  volumes of hyperbolic linkes}.
\newblock {\em Journal of the Australian Mathematical Society},
  100(3):303–337, 2016.

\bibitem{cho2013optimistic}
J.~Cho and J.~Murakami.
\newblock Optimistic limits of the colored jones polynomials.
\newblock {\em J. Korean Math. Soc}, 50(3):641--693, 2013.

\bibitem{CYZ2018hikami}
J.~Cho, S.~Yoon, and C.~K.~Zickert.
\newblock {On the Hikami-Inoue conjecture}.
\newblock {\em arXiv preprint arXiv:1801.08288}, 2018.

\bibitem{garoufalidis2015complex}
S.~Garoufalidis, D.~P. Thurston, and C.~Zickert.
\newblock {The complex volume of $\mathrm{SL}(n,\mathbb{C})$-representations of
  3-manifolds}.
\newblock {\em Duke Mathematical Journal}, 164(11):2099--2160, 2015.

\bibitem{kim2016octahedral}
H.~Kim, S.~Kim, and S.~Yoon.
\newblock {Octahedral developing of knot complement I: pseudo-hyperbolic
  structure}.
\newblock {\em arXiv preprint arXiv:1612.02928}, 2016.

\bibitem{neumann2004extended}
W.~D. Neumann.
\newblock {Extended Bloch group and the Cheeger--Chern--Simons class}.
\newblock {\em Geometry \& Topology}, 8(1):413--474, 2004.

\bibitem{thurston1999hyperbolic}
D.~Thurston.
\newblock {Hyperbolic volume and the Jones polynomial}.
\newblock {\em handwritten note in Grenoble summer school}, 1999.

\bibitem{weeks2005computation}
J.~Weeks.
\newblock Computation of hyperbolic structures in knot theory.
\newblock In {\em Handbook of knot theory}, pages 461--480. Elsevier, 2005.

\bibitem{yokota2002potential}
Y.~Yokota.
\newblock {On the potential functions for the hyperbolic structures of a knot
  complement}.
\newblock {\em Geometry \& Topology Monographs}, 4:303--311, 2002.

\bibitem{yoon2018volume}
S.~Yoon.
\newblock {The volume and Chern-Simons invariant of a Dehn-filled manifold}.
\newblock {\em arXiv preprint arXiv:1801.08288}, 2018.

\bibitem{zagier2007dilogarithm}
D.~Zagier.
\newblock The dilogarithm function.
\newblock In {\em {Frontiers in Number theory, Physics, and Geometry II}},
  pages 3--65. Springer, 2007.

\end{thebibliography}
\end{document}